\DeclareMathAlphabet{\pazocal}{OMS}{zplm}{m}{n}
\numberwithin{equation}{section}
\DeclareMathOperator{\rk}{rk}   
\DeclareMathOperator{\fl}{fl}
\newcommand{\PP}{{\mathcal{P}}}
\newcommand{\sgn}{{\text{sgn}}}
\newcommand{\FF}{\mathcal{F}}
\newcommand{\II}{\mathcal{I}}
\newcounter{quotecount}
\title[$h$-vector Inequalities under Weak Maps]{$h$-vector Inequalities under Weak Maps}
\author {Gaku Liu, Alexander Mason}
\numberwithin{quotecount}{section}
\newtheorem{thm}{Theorem}[section]
\newtheorem{cor}[thm]{Corollary}
\newtheorem{lem}[thm]{Lemma}
\newtheorem{proposition}[thm]{Proposition}
\newtheorem{theorem}[thm]{Theorem}
\theoremstyle{definition}
\newtheorem{definition}[thm]{Definition}
\begin{document}
\maketitle

\begin{abstract}
We study the behavior of $h$-vectors associated to matroid complexes under weak maps, or inclusions of matroid polytopes. Specifically, we show that the $h$-vector of the order complex of the lattice of flats of a matroid is component-wise non-increasing under a weak map. This result extends to the flag $h$-vector. We note that the analogous result also holds for independence complexes and rank-preserving weak maps.
\end{abstract}

\section{Introduction}

The study of matroids and their invariants has undergone remarkable developments in recent years. In particular, many long-standing conjectures such as the Heron--Rota--Welsh conjecture \cite{AHK} and the Dowling--Wilson top-heavy conjecture \cite{BHMPW} have been resolved through the development of powerful techniques. These conjectures concern inequalities that are satisfied between certain invariants, such as the number of flats of a given rank, associated to a given matroid.

In this paper we take a different perspective and consider inequalities between invariants of \emph{different} matroids. The set of all matroids admits a natural partial order whose relations are \emph{weak maps}. Intuitively, if $A$ and $B$ are matroids and $A \to B$ is a weak map, then $A$ is obtained from $B$ by perturbing $B$ to a more general position. (In terms of matroid polytopes, weak maps correspond to reverse inclusions of independence polytopes, and reverse inclusions of base polytopes if the matroids have the same rank.) Weak maps can be very complicated, even for realizable matroids: For example, weak maps of realizable matroids cannot always be realized as continuous deformations of vector configurations or as cells in a matroid subdivision. See for example \cite{Stu}.

In \cite{L}, Lucas gives many inequalities of matroid invariants under weak maps.
It is obvious that some invariants, such as the number of independent sets of given rank and the number of flats of given rank, are non-increasing under a weak map. 
Less obvious is what happens to the \emph{$h$-numbers} corresponding to these invariants. Given a vector of numbers called an $f$-vector, the $h$-vector is the image of the $f$-vector under a certain linear transformation. If the $f$-vector is the face vector of a simplicial complex, then the $h$-vector gives the numerator of the Hilbert-Poincar\'{e} series of the \emph{Stanley-Reisner ring} of the complex.

Here, we focus on two complexes in particular: the \emph{order complex of the lattice of flats} of a matroid and the \emph{independence complex} of a matroid. The \emph{lattice of flats} of a matroid $M$ is the poset whose elements are the flats of the matroid partially ordered by containment. A lattice isomorphic to the lattice of flats of some matroid is also called a \emph{geometric lattice}. The \emph{order complex} of this poset is the simplicial complex whose simplices are the chains of the poset. We denote this complex by $\Delta(M)$. This complex is essentially the same as the Bergman fan of the matroid. The \emph{independence complex} of a matroid is the simplicial complex whose simplices are the independent sets of the matroid. We denote this complex by $\Delta_I(M)$. 

Our main result is the following.

\begin{thm} \label{summarythm}
Let $A$ and $B$ be matroids and $A \to B$ a weak map. The following are true.
\begin{enumerate}
\item The $h$-vector of $\Delta(A)$ is component-wise at least the $h$-vector of $\Delta(B)$. 
\item If $A$ and $B$ have the same rank, then the $h$-vector of $\Delta_I(A)$ is component-wise at least the $h$-vector of $\Delta_I(B)$.
\end{enumerate}
\end{thm}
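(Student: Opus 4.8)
The plan is to reduce an arbitrary weak map to a composition of elementary ones, and then to read off the $h$-vector from a shelling. For any matroid $A$ one has $\rk_{T(A)}=\min(\rk_A,\rk(A)-1)\le\rk_A$, where $T$ is truncation, so $A\to T(A)$ is a weak map; iterating truncation from $A$ down to $\rk(B)$ and then applying a rank-preserving weak map recovers $A\to B$. Hence it suffices to prove each part separately for (i) a single truncation and (ii) a rank-preserving weak map. For (ii), $A\to B$ is equivalent to $\mathcal B(B)\subseteq\mathcal B(A)$; since an inclusion of matroid basis sets can be refined one basis at a time, we may further assume $\mathcal B(B)=\mathcal B(A)\setminus\{b_0\}$ for a single basis $b_0$.

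\textbf{$h$-vectors from shellings.} Matroid independence complexes and order complexes of geometric lattices are shellable, and for any shelling $F_1,\dots,F_t$ of a complex $\Delta$ one has $h_i(\Delta)=\#\{j:|R(F_j)|=i\}$, where $R(F_j)=\{v\in F_j:F_j\setminus v\in\langle F_1,\dots,F_{j-1}\rangle\}$ is the restriction face. Shelling $\Delta_I(M)$ by the lexicographic order on bases (for a fixed linear order on the ground set) yields $R(b)=\{e\in b:e\text{ is not internally active in }b\}$, so $h_i(\Delta_I(M))=\#\{b\in\mathcal B(M):b\text{ has }\rk(M)-i\text{ internally active elements}\}$. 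Björner's edge-lexicographic labeling of $L(M)$ — label a cover $x\lessdot y$ by the least atom $a$ with $x\vee a=y$ — induces a shelling of $\Delta(M)$ in which the restriction face of a maximal chain is governed by its descent set; thus $h_i(\Delta(M))=\#\{\text{maximal chains of }L(M)\text{ with exactly }i\text{ descents}\}$, and more precisely the flag $h$-vector entry $h_S(\Delta(M))$ counts the maximal chains whose descent set is exactly $S$, which is why the flag refinement drops out for free.

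\textbf{The elementary cases.} It now remains, for each elementary move, to inject the facets of the smaller complex into those of the larger one without increasing the restriction face. For (ii) in part (2), with $\mathcal B(B)=\mathcal B(A)\setminus\{b_0\}$: deleting $b_0$ never lowers the internal activity of a surviving basis, and raises it — by exactly one — only for bases $b$ with $|b\triangle b_0|=2$ meeting an explicit condition relative to $b_0$; one then checks coordinatewise that the single basis $b_0$, with whatever internal activity it carries, exactly compensates for these shifts, giving $h_i(\Delta_I(A))\ge h_i(\Delta_I(B))$. For (i), $L(T(A))$ is $L(A)$ with its coatoms removed, so one lifts a maximal chain of $L(T(A))$ to one of $L(A)$ by inserting a judiciously chosen coatom, arranged so as to preserve the descent set. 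For (ii) in part (1) one must track exactly which flats of $A$ cease to be flats of $B$ when $b_0$ is removed — and which set becomes a new (coatomic) flat — and then lift maximal chains of $L(B)$ to maximal chains of $L(A)$ compatibly with descents.

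\textbf{The main obstacle.} The genuinely hard step is this last one: for order complexes the lattices of flats of $A$ and $B$ are not sublattices of one another — even an elementary rank-preserving weak map reshuffles the flats rather than deleting a few — so the descent-set-preserving chain injection cannot be produced formally and must be built by hand from an explicit local description of an elementary weak map on geometric lattices. A subsidiary point that also needs to be pinned down is the claim, used in the reduction, that an inclusion of matroid basis sets can always be refined into a sequence of single-basis deletions through matroids.
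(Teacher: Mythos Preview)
Your reduction to single-basis deletions is false, and this is the first fatal gap. Take $A$ to be the rank-$2$ matroid on $\{1,2,3,4\}$ with $1$ and $2$ parallel, so $\mathcal B(A)=\{13,14,23,24,34\}$, and $B$ the matroid with $1,2,3$ all parallel, so $\mathcal B(B)=\{14,24,34\}$. Then $A\to B$ is a rank-preserving weak map, but neither $\mathcal B(A)\setminus\{13\}=\{14,23,24,34\}$ nor $\mathcal B(A)\setminus\{23\}=\{13,14,24,34\}$ satisfies the basis exchange axiom (for the first, take $B_1=14$, $B_2=23$, $x=4$; for the second, $B_1=24$, $B_2=13$, $x=4$), and removing $14$, $24$, or $34$ leaves the target no longer contained. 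So there is no matroid strictly between $A$ and $B$ in the weak order, and the map cannot be factored through single-basis deletions. The ``subsidiary point'' you flagged is not a technicality but a genuine obstruction, and the entire elementary-case analysis collapses with it.

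Even setting this aside, the step you yourself call ``the main obstacle'' --- the descent-preserving lift of maximal chains of $L(B)$ into $L(A)$ for an elementary rank-preserving weak map --- is not carried out at all; you only describe that it ``must be built by hand.'' That is the whole content of part~(1), so as it stands the proposal is an outline with the crucial argument missing.

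The paper's approach avoids both problems. For part~(2), it is a one-line application of Stanley's monotonicity theorem for Cohen--Macaulay subcomplexes: since $\Delta_I(B)\subseteq\Delta_I(A)$ with equal dimension and both are shellable, $h_k^I(B)\le h_k^I(A)$ immediately, with no activity bookkeeping needed. For part~(1), the paper also decomposes into a strong map plus a rank-preserving weak map (your truncation step is a special case), and handles the strong-map case again by Stanley monotonicity. But for the rank-preserving case it works with an \emph{arbitrary} rank-preserving weak map, with no further reduction: it constructs a fine-degree-preserving ring map $\psi:k[B]\to k[A']$ (where $A'$ is an auxiliary graded poset of those $A$-flats whose $B$-closure preserves rank), passes to the Artinian quotients $R_B\to R_{A'}$ by the canonical linear system of parameters, and proves injectivity by exhibiting explicit preimages in the dual spaces $\Phi_{A'}\to\Phi_B$, built from signed sums over a Young-subgroup orbit of the Jordan--H\"older sequence of each essential chain. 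The flag $h$-vector inequality then drops out from the fine grading.
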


We note that (2) is an immediate consequence of Stanley's \cite{S1} monotonicity theorem on injections of simplicial complexes. Therefore the paper is mainly devoted to proving (1). We observe that a weak map of matroids induces a surjection of the corresponding geometric lattices, but this surjectivity alone is not enough to imply the result for general lattices (or even geometric lattices), so (1) is a special property of geometric lattices and weak maps.

Our result for (1) is actually finer, and holds for \emph{flag $h$-vectors}. The flag $h$-vector of a graded poset of rank $r$ is a certain vector $(h_S : S \subset \{1,\dots,r\})$ with the property that $\sum_{|S| = k} h_S$ is equal to $h_k$ of the order complex of the poset.
We prove the following:

\begin{thm} \label{mainthm}
Let $A$ and $B$ be matroids of rank $r$ and $A \to B$ a weak map. Then the flag $h$-vector of $\Delta(A)$ is component-wise at least the flag $h$-vector of $\Delta(B)$. 
\end{thm}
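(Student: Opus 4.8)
The plan is to reduce to the case of a weak map that covers a single relation in the weak order, since a general weak map $A \to B$ of matroids of rank $r$ factors through a sequence of such "elementary" weak maps (each changing the independence structure minimally), and component-wise inequality of flag $h$-vectors composes transitively. Having made this reduction, the combinatorial heart becomes a statement purely about the two geometric lattices: a weak map $A \to B$ induces a rank-preserving surjection $\varphi \colon L(A) \twoheadrightarrow L(B)$ of geometric lattices (each flat of $B$ pulls back to a flat of $A$ of the same rank; equivalently, the closure operator of $A$ refines that of $B$). So I would aim to prove: if $\varphi \colon L \twoheadrightarrow L'$ is a rank-preserving surjection of geometric lattices, then the flag $h$-vector of $\Delta(L)$ dominates that of $\Delta(L')$ component-wise.

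To get at the flag $h$-vector I would use the standard expansion of flag $h$-numbers of a graded poset in terms of the $\mathbf{ab}$-index / descent statistics on maximal chains: for $S \subseteq \{1,\dots,r-1\}$, $h_S(L)$ counts, via inclusion–exclusion over $S$, the maximal chains of $\bar L = L \setminus \{\hat 0, \hat 1\}$ together with an "$R$-labeling" whose descent set is exactly $S$. Geometric lattices are famously $EL$-shellable (Björner), and more importantly admit a concrete $EL$-labeling coming from a linear order on the atoms. The strategy is to build an $EL$-labeling of $L'$ and then \emph{lift} it through $\varphi$ to an $EL$-labeling of $L$ in a descent-compatible way: choose a linear order on the atoms of $L$ refining (the pullback of) a chosen order on the atoms of $L'$, use the induced atom-orderings to define the canonical edge labelings on both lattices, and show that $\varphi$ maps the set of descent-$S$ maximal chains of $L$ \emph{onto} the set of descent-$S$ maximal chains of $L'$. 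Since $h_S$ equals the number of maximal chains whose (decreasing-in-label) "falling chain" has descent set $S$, a surjection on these sets for every $S$ gives exactly the desired component-wise inequality $h_S(L) \ge h_S(L')$.

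The main obstacle, and where the special structure of geometric lattices must be used, is establishing that surjectivity: given a maximal chain $\bar c'$ in $L'$ with descent set $S$ under the chosen $EL$-labeling, I must produce a maximal chain in $L$ mapping to it with the \emph{same} descent set — not merely some preimage chain. Here the key inputs are (i) that $\varphi$ is a rank-preserving lattice surjection, so a cover $x' \lessdot y'$ in $L'$ lifts to a cover in $L$ above any chosen preimage of $x'$ (semimodularity of $L$ is what guarantees we can always take a covering step, and the join-with-an-atom description of covers in a geometric lattice lets us control which atom is adjoined); and (ii) that the canonical $EL$-labels are computed from the \emph{minimal} atom not below the current flat, a quantity that behaves well under the refined atom order precisely because $\varphi^{-1}$ of a flat of $L'$ is a flat of $L$. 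I would run an induction up the chain, at each step picking the lift of the next cover of $\bar c'$ that adjoins the smallest available atom of $L$ in the fiber, and check that the label comparison (ascent vs. descent) at that step agrees with the one in $L'$; the atom-order refinement is designed exactly so that "smallest atom not below $x$ in $L$" projects to "smallest atom not below $\varphi(x)$ in $L'$" when $x$ lies over the chain. Once this lifting lemma is in place, summing $h_S(L) \ge h_S(L')$ over $|S| = k$ recovers Theorem 1.1(1), and the rank-$r$ hypothesis is used only to ensure $\varphi$ is rank-preserving (without equal rank one instead gets the coarser statement about $\Delta$ after the usual truncation argument).
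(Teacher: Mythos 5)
Your strategy---reduce to a rank-preserving surjection of geometric lattices and then lift descent-$S$ maximal chains from $L'$ to $L$---breaks down at exactly the step you flag as ``the main obstacle,'' and the gap is genuine rather than a detail to be checked. First, the reformulation discards essential structure: under a weak map $A \to B$ the flats of $B$ are \emph{not} flats of $A$ (that would be a strong map), so a flat of $B$ does not ``pull back to a flat of $A$ of the same rank''; what is true is only that the $B$-closure map $\phi_B \colon \mathcal{F}(A) \to \mathcal{F}(B)$ is surjective by rank. The paper explicitly observes that surjectivity of the induced map of geometric lattices is not by itself enough to imply the result, so an argument phrased purely in terms of a lattice surjection is aimed at a statement that is too weak (or false). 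Second, and decisively, the proposed lift does not preserve descent sets. If $C$ is a maximal chain of $\mathcal{F}(B)$ with Jordan--H\"older string $b_1 \dots b_{r+1}$, the natural lift is $G_i = \phi_A(\{b_1,\dots,b_i\})$; this preserves the flag (that is Prop.~\ref{phisurj}(3), which already yields $f_S(A) \ge f_S(B)$), but its labels are $\min(G_i \setminus G_{i-1})$, and since $G_{i-1}$ is in general a proper subset of $F_{i-1}$ there may be an element $c \in F_{i-1}\setminus G_{i-1}$ with $c \in G_i$ and $c < b_i$, turning an ascent into a descent or vice versa. No refinement of the atom order on $L$ repairs this, because the discrepancy is caused by elements spanned in $B$ but not in $A$; you give no argument that a descent-preserving, injective section of the chain map exists, and that existence is essentially the whole theorem.

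The paper's proof avoids this entirely by going algebraic: it introduces the auxiliary complex $A'$ (the flats of $A$ whose rank is preserved by $\phi_B$) so that the Stanley--Reisner ring map $\psi \colon k[B] \to k[A']$ preserves the fine grading, and then proves injectivity of the induced map on quotients by the canonical linear systems of parameters by dualizing: it exhibits an explicit basis of the annihilator $\Phi_B$ of $\Theta_B$, indexed by essential chains and given by alternating sums $\sum_\sigma \sgn(\sigma)\,\epsilon_{C_\sigma}$ over (subgroups of) the symmetric group, together with explicit preimages in $\Phi_{A'}$. A purely combinatorial lifting argument of the kind you propose would need to establish a flag-preserving, descent-preserving injection on essential chains, which is what this dual-basis computation accomplishes indirectly; as written, your proposal asserts that step rather than proving it.
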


These results can be interpreted in terms of valuative invariants. The (flag) $h$-vectors associated to the order complex of the lattice of flats and independence complex of a matroid are known to be valuative invariants of the matroid (the fact that the flag $f$-vector of the lattice of flats is valuative was recently proven in \cite{FS}). Our results can be interpreted as saying that these invariants are monotonic with respect to inclusion of matroid polytopes. It would be interesting to determine which other well-known valuative invariants have this property.  From \cite{L}, such invariants include the M\"{o}bius function, Crapo's beta invariant, and Whitney numbers of the first kind.

Our work is inspired by previous work of Nyman and Swartz \cite{NS}, where they find the component-wise maximizers and minimizers of the flag $h$-vector of $\Delta(M)$ over all matroids of fixed rank and size. In particular, the flag $h$-vector is maximized by the uniform matroid and minimized by the near-pencil matroid. All matroids have a weak map from a uniform matroid of the same size and rank, so our result recovers their maximizer. On the other hand, not all matroids have a weak map to the near-pencil matroid of the same size and rank, and in general the minimal matroids of given size and rank with respect to the weak map order is not well-understood.

The proof idea is as follows. Given a weak map of matroids $A \to B$, we construct a degree-preserving map from the Stanley--Reisner ring of $B$ to a certain quotient of the Stanley--Reisner ring of $A$. This map is readily seen to be injective, but it is much harder to show that the map remains injective after quotienting by a linear system of parameters. We do this in an indirect way, by showing that the dual map between the corresponding dual vector spaces is surjective.

\section{Matroid Preliminaries}

In this section we establish terminology and notation. We will assume the reader is already familiar with the basic properties of matroids and refer to \cite{Ox} for further background.

\begin{definition}
A matroid $M$ is a (finite) ground set $E$ together with a collection $\II(M)$ of subsets called \emph{independent sets}. They have the following properties:

\begin{enumerate}
\item A subset of an independent set is independent.

\item Given two independent sets with $|A| < |B|$, there is some $x \in B \setminus A$ such that $x \cup A$ is also independent.

\item $\emptyset$ is independent.
\end{enumerate}
\end{definition}

In this paper we assume all matroids have the same ground set $[n] = \{1, \dots , n\}$.


\begin{definition}
A \emph{flat} of a matroid $M$ is $F \subseteq E$ such that if $I$ is an independent subset of $F$ and $x \in E \setminus F$, then $I \cup \{x\}$ is independent.
\end{definition}

Write $\mathcal{F}(M)$ for the set of flats of $M$. Note that $\mathcal{I}(M)$ and $\mathcal{F}(M)$ are both posets ordered by inclusion. $\mathcal{F}(M)$ is a lattice called ``the lattice of flats of $M$".

\begin{proposition} \label{flatsdef}
Flats have the following properties:
\begin{enumerate}
\item An intersection of two flats is a flat.

\item Given a flat $F$ and $x \in E \setminus F$, there is a unique flat $G$ containing $x$ that covers $F$ in the poset $\mathcal{F}(M)$.

\item $E$ is a flat.
\end{enumerate}
\end{proposition}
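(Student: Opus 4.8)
I would establish the three parts in the order (3), (1), (2), since the argument for (2) relies on (1).

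Part (3) is immediate: the condition defining ``$F$ is a flat'' ranges over $x \in E \setminus F$, and this set is empty when $F = E$, so the requirement holds vacuously. For part (1), given flats $F_1, F_2$, an independent set $I \subseteq F_1 \cap F_2$, and $x \in E \setminus (F_1 \cap F_2)$, I would note that $x$ lies outside at least one of $F_1, F_2$, say $x \notin F_i$; then $I$ is an independent subset of the flat $F_i$ with $x \in E \setminus F_i$, so $I \cup \{x\}$ is independent by the defining property of $F_i$. Hence $F_1 \cap F_2$ is a flat, and a trivial induction (the set of flats being finite and nonempty, e.g.\ containing $E$ by (3)) extends this to any intersection of flats.

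For part (2), fix a flat $F$ and $x \in E \setminus F$, and let $G$ be the intersection of all flats containing $F \cup \{x\}$; this is a flat by the previous step, and it is by construction the smallest flat containing both $F$ and $x$ (equivalently, the closure of $F \cup \{x\}$). To prove existence I must check that $G$ covers $F$. That $F \subsetneq G$ is clear since $x \in G \setminus F$. For the covering property I would take an arbitrary flat $F'$ with $F \subsetneq F' \subseteq G$ and show $F' = G$ via ranks: choosing $z \in F' \setminus F$ and a basis $B$ of $F$, the flat property of $F$ forces $B \cup \{z\}$ to be independent, so $\rk(F') \geq \rk(F) + 1$; on the other hand $F' \subseteq G$ and $\rk(G) = \rk(F \cup \{x\}) \leq \rk(F) + 1$, whence $\rk(F') = \rk(G)$, and a flat contained in a flat of equal rank equals it (a basis of the former being a basis of the latter). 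For uniqueness, if $G_1, G_2$ are flats each containing $x$ and each covering $F$, then $G_1 \cap G_2$ is a flat by (1) with $F \subsetneq G_1 \cap G_2 \subseteq G_1$ (strictness because $x \in (G_1 \cap G_2) \setminus F$), so the covering hypothesis on $G_1$ forces $G_1 \cap G_2 = G_1$, i.e.\ $G_1 \subseteq G_2$; by symmetry $G_1 = G_2$.

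The only step carrying real content is the covering claim in part (2), where the rank (equivalently, closure) computation is genuinely needed; I expect that to be the main obstacle, albeit an entirely routine one, since the reduction of (2) to (1) and the uniqueness argument are purely formal.
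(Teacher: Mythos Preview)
Your argument is correct. The paper, however, does not supply its own proof of this proposition: it appears in the ``Matroid Preliminaries'' section as a standard fact, with the reader referred to \cite{Ox} for background. So there is no in-paper proof to compare against; your write-up would serve as a self-contained justification where the paper simply quotes the result.

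Two minor remarks on presentation relative to the paper's logical order. First, you invoke the rank function in the covering argument for (2), but in the paper rank is only defined in the definition \emph{following} this proposition; if you want the proof to slot in where the proposition sits, you should either move the rank definition earlier or rewrite the covering step in rank-free language (e.g.\ show directly that the set $H=\{y:B\cup\{x,y\}\text{ is dependent or }y\in B\cup\{x\}\}$ is a flat containing $F\cup\{x\}$, hence $G\subseteq H$, and then argue that any flat strictly between $F$ and $H$ is impossible). Second, the equality $\rk(G)=\rk(F\cup\{x\})$ is exactly the ``equivalently'' clause in the paper's subsequent definition of rank, which the paper also does not prove; your proof implicitly relies on it, so if you are aiming for full self-containment you would need to justify that closures preserve rank. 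None of this affects correctness---these are all routine facts---but it is worth flagging since you identified the covering step as the only place with real content.
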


\begin{definition}
The \emph{rank} of a set $G \subseteq E$ is the size of the largest independent set it contains, or, equivalently, that of the smallest flat containing it. We denote the rank of $G$ by $\rk(G)$. The rank of the matroid $M$ is defined to be $\rk(E)$. An independent set of size $\rk(E)$ is called a \emph{basis}.
\end{definition}


\begin{definition} \label{def:closure}
The \emph{closure map} $\phi_M : \PP(E) \rightarrow \FF(M)$ (where $\PP(E)$ is the power set of $E$) is defined so that $\phi_M (G)$ is the smallest flat containing $G$.
\end{definition}

The subscript of $\phi_M$ may be omitted when it is unambiguous which matroid is being referred to. If $\phi_M (G) = F$, then we say that $F$ is the \emph{closure} (or ``$M$-closure") of $G$, or that $G$ \emph{spans} $F$.

The map $\phi$ preserves containment: If $G \subseteq G'$, then $\phi(G) \subseteq \phi(G')$.

\begin{definition}
The \emph{order complex} of a poset is the simplicial complex whose faces are the chains of the poset.
\end{definition}

Let $0_M$ denote the minimal flat of a matroid $M$. (In other words, $0_M$ is the set of all loops of $M$, and is $\emptyset$ if $M$ is loopless.) We will write $\Delta(M)$ for the order complex of $\FF(M) \setminus \{0_M, E\}$.

\section{The $f$- and $h$-vectors} \label{sec:fh}

The usual $f$- and $h$-vectors for a simplicial complex are defined as follows:

\begin{definition}
Let $\Delta$ be a simplicial complex of dimension $r-1$.
\begin{enumerate}

\item The \emph{$f$-vector} of $\Delta$ is the sequence $(f_i (\Delta))_{i=0}^r$, where $f_i (\Delta)$ is the number of faces with cardinality $i$.\footnote{For convenience, we index the $f$-vector by cardinality instead of dimension. We will then modify the definition of the $h$-vector so that it agrees with the usual indexing of the $h$-vector.}

\item The \emph{$h$-vector} is the sequence $(h_i (\Delta))_{i=0}^r$ satisfying
\[
\sum_{i = 0} ^r h_i x^{r-i} = \sum_{i = 0} ^r f_i (x-1)^{r-i}.
\]

\end{enumerate}
\end{definition}

We now give a refinement of the $f$- and $h$-vectors for posets.
Let $P$ be a (finite) graded poset with rank function $\rk$. We define the \emph{rank} of $P$ to be the maximal cardinality of a chain. Given a chain $C$ in $P$, the \emph{flag} of $C$, written $\fl(C)$, is the set of ranks of flats in that chain. That is, $\fl(C) = \{\rk(F)\}_{F \in C}$. This is a subset of $[r]$, where $r$ is the rank of the poset. For our purposes, the empty set will also be considered a chain, with flag $\emptyset$.

\begin{definition}
Let $P$ be a graded poset of rank $r$.
\begin{enumerate}

\item The flag $f$-\emph{vector} of $P$ is the tuple $(f_S (P))$ taken over all $S \in \PP ([r])$, where $f_S (P)$ is the number of chains $C$ such that $\fl C = S$.

\item The flag $h$-\emph{vector} is the tuple $(h_S (P))$ taken over all $S \subseteq \PP ([r])$, where
\[
h_S = \sum_{T \subseteq S} (-1)^{|S| - |T|} f_T.
\]
\end{enumerate}
\end{definition}

While the flag vectors are defined for posets, we will abuse notation and say that $(f_S(P))$ is the flag $f$-vector of the order complex $\Delta(P)$.

We write $f_S (M)$ for $f_S (\Delta(M))$, $f_k (M)$ for $f_k (\Delta(M))$, and do similarly for the $h$-vectors.

\begin{proposition} \label{fh-vectors}
The following are true.
\begin{enumerate}
\item $f_k = \sum_{|S|=k} f_S$.

\item $h_k = \sum_{|S|=k} h_S$.

\item $f_S = \sum_{T \subseteq S} h_T$.

\item $f_{r} = \sum_{S \subseteq [r]} h_S$.
\end{enumerate}
\end{proposition}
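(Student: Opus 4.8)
The plan is to establish the four identities in the order (1), (3), (2), (4), since each builds on the previous together with elementary manipulations; nothing beyond Möbius inversion on the Boolean lattice and the binomial theorem is needed.

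For (1), I would observe that in a graded poset the rank function is strictly increasing along any chain, so a chain $C$ of cardinality $k$ has $|\fl(C)| = k$. Hence the faces of $\Delta(P)$ counted by $f_k$ — that is, the $k$-element chains of $P$ — are partitioned according to their flag, and the flags that occur are exactly the $k$-element subsets $S \subseteq [r]$; summing over them gives $f_k = \sum_{|S|=k} f_S$. For (3), the defining relation $h_S = \sum_{T\subseteq S}(-1)^{|S|-|T|}f_T$ is precisely Möbius inversion of $f_S = \sum_{T \subseteq S} h_T$ over the Boolean lattice $\PP([r])$, whose Möbius function is $\mu(T,S) = (-1)^{|S|-|T|}$; so (3) is immediate.

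For (2), I would feed (1) and (3) into the polynomial identity defining the ordinary $h$-vector. Starting from $\sum_k f_k(x-1)^{r-k}$, part (1) rewrites this as $\sum_{S\subseteq[r]} f_S (x-1)^{r-|S|}$, and part (3) rewrites it further as $\sum_{S}\sum_{T\subseteq S} h_T (x-1)^{r-|S|}$. Exchanging the order of summation and, for fixed $T$, grouping the supersets $S \supseteq T$ by their cardinality, the binomial theorem collapses $\sum_{S\supseteq T}(x-1)^{r-|S|}$ to $((x-1)+1)^{r-|T|} = x^{r-|T|}$. Thus $\sum_k f_k(x-1)^{r-k} = \sum_{T} h_T \, x^{r-|T|} = \sum_k\bigl(\sum_{|T|=k}h_T\bigr)x^{r-k}$, and comparing coefficients of $x^{r-k}$ with the definition of $h_k$ yields $h_k = \sum_{|S|=k} h_S$. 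Finally, (4) is the special case $S = [r]$ of (3), giving $f_{[r]} = \sum_{S\subseteq[r]} h_S$, together with (1) for $k = r$, which gives $f_r = f_{[r]}$ since $[r]$ is the only $r$-element subset of $[r]$.

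I do not anticipate a genuine obstacle; the only step with any moving parts is the binomial collapse in the proof of (2), and that is entirely routine.
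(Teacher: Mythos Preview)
Your proposal is correct. The paper itself states Proposition~\ref{fh-vectors} without proof, treating these identities as standard background facts; so there is no ``paper's own proof'' to compare against. Your argument is the expected elementary verification: part (1) is the observation that flag size equals chain cardinality, part (3) is M\"obius inversion on the Boolean lattice, part (2) is the binomial-collapse computation you describe, and part (4) is the specialization $S=[r]$. Nothing is missing.
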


To avoid confusion with other notions of minimality and maximality, chains of maximal length (i.e. chains of flag $[r]$) will be called \emph{full} in this paper.

We now focus on the case when $\Delta = \Delta(M)$ where $M$ is a matroid. There is a useful partition of the set of full chains of $\Delta(M)$ into sets of sizes $h_S$, as follows: Given a full chain of flats $0_M \varsubsetneq F_1 \varsubsetneq \dots  \varsubsetneq F_r \varsubsetneq E$, let $b_i = \min F_i \setminus F_{i-1}$, where elements of the ground set $[n]$ are ordered in the usual way. (Here, $F_0 = 0_M$ and $F_{r+1} = E$.) The resulting string $b_1 \dots b_{r+1}$ is sometimes called the chain's \emph{Jordan-H\"{o}lder sequence} \cite{B1}. By property (2) of Prop~\ref{flatsdef}, any element of $F_i \setminus F_{i-1}$ determines $F_i$ given $F_{i-1}$. Thus there is an injection from full chains of flats to ordered sets of size $r+1$ in $[n]$.
 
Note that not all such ordered sets come from chains of flats. First, each $b_i$ must not be in the flat spanned by $b_1, \dots, b_{i-1}$, or, equivalently, $\{b_1, \dots, b_{r+1}\}$ must be a basis for $E$. However, each $b_i$ must also be the minimal element in the uniquely determined $F_i \setminus F_{i-1}$. Call an ordered basis that has this latter property, and thus corresponds to a chain of flats, ``valid".

Now given a string $b_1 \dots b_{r+1}$, we say the string (or its corresponding full chain, if it has one) has a descent \emph{across} position $i$ (or alternatively, across the corresponding $F_i$) if $b_i > b_{i+1}$, and that it has an \emph{ascent} otherwise. The set of all indices across which a string (full chain) has a descent is that string's \emph{descent set}.

\begin{theorem} \label{hvenum} \cite{S2}
Let $M$ be a matroid of rank $r+1$.

\begin{enumerate}
\item The set of valid strings with descent sets contained in $S \subseteq [r]$ has cardinality $f_S (M)$.

\item The set of valid strings with descent sets equal to $S \subseteq [r]$ has cardinality $h_S (M)$.
\end{enumerate}
\end{theorem}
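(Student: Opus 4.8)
The plan is to establish (1) via an explicit bijection and then derive (2) from (1) by M\"obius inversion over the Boolean lattice $\PP([r])$. Recall that $f_S(M)$ counts chains of flats $G_1 \varsubsetneq \cdots \varsubsetneq G_k$ in $\FF(M)\setminus\{0_M,E\}$ whose set of ranks is exactly $S = \{s_1 < \cdots < s_k\}\subseteq[r]$, and that (via the Jordan--H\"older sequence $b_i = \min(F_i\setminus F_{i-1})$) valid strings correspond bijectively to full chains of flats $0_M \varsubsetneq F_1 \varsubsetneq \cdots \varsubsetneq F_r \varsubsetneq E$. So I would prove (1) by producing mutually inverse maps between full chains of flats with descent set contained in $S$ and chains $C$ with $\fl(C)=S$.

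The crux is the following lemma: for flats $F \varsubsetneq F'$ with $\rk F' - \rk F = m$, there is a \emph{unique} saturated chain $F = H_0 \varsubsetneq H_1 \varsubsetneq \cdots \varsubsetneq H_m = F'$ of flats whose local Jordan--H\"older sequence $c_j := \min(H_j\setminus H_{j-1})$ is strictly increasing. For existence I would build the chain greedily, setting $c_j := \min(F'\setminus H_{j-1})$ and $H_j := \phi(H_{j-1}\cup\{c_j\})$: then $\rk H_j = \rk H_{j-1}+1$ (since $c_j\notin\phi(H_{j-1})=H_{j-1}$), $H_j \subseteq \phi(F')=F'$, and any element of $H_j\setminus H_{j-1}$ smaller than $c_j$ would lie in $F'\setminus H_{j-1}$, contradicting the choice of $c_j$; hence $\min(H_j\setminus H_{j-1})=c_j$, and $c_j\in H_j$ forces $c_{j+1} = \min(F'\setminus H_j)>c_j$. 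For uniqueness, given any such chain $H'_0,\dots,H'_m$ with $c'_j := \min(H'_j\setminus H'_{j-1})$ increasing, one has $F'\setminus H'_{j-1} = \bigsqcup_{i=j}^{m}(H'_i\setminus H'_{i-1})$ with each block having minimum $c'_i\ge c'_j$, so $\min(F'\setminus H'_{j-1})=c'_j$; then induction --- using $H'_j = \phi(H'_{j-1}\cup\{c'_j\})$, which holds since both sides are flats of the same rank with one contained in the other --- forces $H'_j = H_j$ for all $j$.

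Given the lemma, the bijection is natural: from a chain $C = (G_1 \varsubsetneq \cdots \varsubsetneq G_k)$ with $\fl(C)=S$, set $G_0 = 0_M$ and $G_{k+1}=E$ and refine each gap $G_j \varsubsetneq G_{j+1}$ by the unique increasing saturated chain; concatenating yields a full chain of flats whose Jordan--H\"older sequence increases within each gap, hence has an ascent at every position of $[r]$ lying strictly between consecutive elements of $S\cup\{0,r+1\}$ --- that is, at every position in $[r]\setminus S$ --- so its descent set is contained in $S$. Conversely, from a full chain of flats with descent set $\subseteq S$, deleting every flat whose rank is not in $S$ leaves a chain of flag exactly $S$ (nothing forbidden survives, since $0,r+1\notin S$); the descent condition makes the Jordan--H\"older sequence increase across each gap between consecutive $S$-flats, so by the uniqueness clause of the lemma re-refining recovers the original full chain, while refining-then-deleting is visibly the identity. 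This proves (1). Finally, writing $g(U)$ for the number of valid strings with descent set \emph{equal} to $U$, part (1) says $f_T(M) = \sum_{U\subseteq T} g(U)$ for every $T\subseteq[r]$, so M\"obius inversion on $\PP([r])$ gives $g(S) = \sum_{T\subseteq S}(-1)^{|S|-|T|}f_T(M) = h_S(M)$, which is exactly (2).

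I expect the one genuine obstacle to be the lemma --- specifically, checking that the increasing-Jordan--H\"older-sequence condition forces the greedy choice at every step, so that the saturated refinement of a rank gap is unique; the bijection itself and the inclusion--exclusion step are then routine.
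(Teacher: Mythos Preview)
Your proposal is correct and follows essentially the same approach as the paper: both construct the greedy ``minimal completion'' of each rank gap by successively adjoining the least element, prove its uniqueness as the only saturated refinement with increasing Jordan--H\"older sequence, and use this to set up mutually inverse maps between chains of flag $S$ and full chains with descent set contained in $S$, then deduce (2) by inclusion--exclusion. The only cosmetic difference is that you package the refinement step as a standalone local lemma with a direct uniqueness argument, whereas the paper proves uniqueness of the global completion by contradiction (locating a forced descent in any competing refinement).
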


Because we will use some of the constructions from the proof later, we provide a proof of this theorem.

\begin{proof} (1) Fix $S \subseteq [r]$. We will demonstrate a bijection between chains of flag $S$ and full chains with descent set contained in $S$. First, note that any non-full chain of flats $0_M = F_0 \varsubsetneq F_1 \varsubsetneq \dots \varsubsetneq F_k = E$ has a unique \emph{minimal completion} to a full chain as follows: for each interval $[F_i,F_{i+1}]$ where $\rk(F_{i+1}) > \rk(F_i) + 1$, let $F_{i, 1}$ be the flat covering $F_i$ containing $a_{i, 1} := \min(F_{i+1} \setminus F_i)$. Then, inductively, let $F_{i, j+1}$ be the flat covering $F_{i, j}$ containing $a_{i, j+1} := \min(F_{i+1} \setminus F_{i, j})$ for $1 \leq j \leq k_i$, where $k_i = \rk(F_i) - \rk(F_{i+1})-1$.

Now consider the full chain
\[
0_M \varsubsetneq F_{0, 1} \varsubsetneq \dots \varsubsetneq F_{0, k_0} \varsubsetneq F_1 \varsubsetneq F_{1, 1} \varsubsetneq \dots \varsubsetneq F_k = E.
\]
By construction, $b_{i+1} = a_{i, 1} < a_{i, 2} < \dots < a_{i, k_i} < \min(F_{i+1} \setminus F_{i, k_i})$, so each new flat $F_{i, j}$ has an ascent across it. That is, the descent set of this chain is contained in $S$.

Denote by $\mu(C)$ the minimal completion of a chain $C$ of flag $S$. Let $\nu$ be the map that restricts a full chain to the flats with ranks in $S$. We claim that $\nu$ is the inverse of $\mu$. Clearly $\nu(\mu(C)) = C$. To show $\mu(\nu(C)) = C$, it suffices to check that the minimal completion is unique, in that $\mu(C)$ is the unique full chain containing $C$ with no descents outside $S$. Suppose that $D$ is some other full chain containing $C$, and let $G_i$ be its flat of rank $i$. Let $G_j$ be the first flat in which $D$ differs from $\mu(C)$, and $F_i, F_{i+1}$ the flats of $C$ such that $\rk(F_i) < j < \rk(F_{i+1})$. Then $G_{j-1}$ was constructed by the above interpolation process, while $G_j$ was not. That is, $\min(F_{i+1} \setminus G_{j-1}) \notin G_j$. Let $a = \min(F_{i+1} \setminus G_{j-1})$, and let $G_k$ be the first flat in $D$ which contains $a$. Then $\min(G_{k-1} \setminus G_{k-2}) > a = \min(G_{k} \setminus G_{k-1})$, so $G_{k-1}$ is a flat of $D$ with a descent across it, whose rank is not in $S$ since $G_{k-1} \in [F_i, F_{i+1}]$ but is neither $F_i$ nor $F_{i+1}$. This proves the uniqueness of $\mu(C)$.

Therefore $f_S (M)$, which counts the number of chains of flag $S$, also counts the number of valid strings with descent set $\subseteq S$.

(2) immediately follows from the identity $f_S = \sum_{T \subseteq S} h_T$.
\end{proof}

A flat $F$ is \emph{minimal} in a (poset) interval $[G, H]$ if $F$ is one of the flats generated by the interpolation process described in the above proof. That is, if $\rk(F) = \rk(G) + j$, then $F$ contains the successively minimal elements $a_{i, 1}, a_{i, 2}, \dots, a_{i, j}$ found in the inductive process described above for $F_i = G$, $F_{i+1}=H$. For any interval, there is exactly one minimal flat of each rank.

A chain of flats $0_M = F_0 \varsubsetneq F_1 \varsubsetneq \dots \varsubsetneq F_k = E$ is \emph{nonessential} if it contains at least one flat that is minimal with respect to its neighbors, that is, some $F_i$ that is minimal in $[F_{i-1}, F_{i+1}]$. Otherwise, the chain is \emph{essential}. Thus we can rephrase the above result as follows:

\begin{proposition}
$h_S (M)$ counts the number of essential chains of flag $S$.
\end{proposition}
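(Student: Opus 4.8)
The plan is to upgrade the bijection from the proof of Theorem~\ref{hvenum} into a bijection between the essential chains of flag $S$ and the full chains with descent set exactly $S$. By part~(2) of that theorem, together with the correspondence between valid strings and full chains of flats, $h_S(M)$ equals the number of full chains whose descent set is exactly $S$. Recall that minimal completion $\mu$ and restriction $\nu$ are mutually inverse bijections between chains of flag $S$ and full chains with descent set contained in $S$. Since these are already inverse bijections on the ambient sets, it suffices to prove, for a chain $C$ of flag $S$,
\[
C \text{ is essential} \iff \mu(C) \text{ has descent set exactly } S .
\]

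To set up, list the flats of $C$ together with the endpoints $0_M$ and $E$ as $G_0 \subsetneq G_1 \subsetneq \dots \subsetneq G_{m+1}$. As observed in the proof of Theorem~\ref{hvenum}, every flat inserted by $\mu$ has an ascent across it; hence $\mu(C)$ has descent set exactly $S = \fl(C)$ if and only if each original flat $G_j$ ($1 \le j \le m$) has a descent across it in $\mu(C)$. So the displayed equivalence reduces to the local claim: $G_j$ has a descent across it in $\mu(C)$ if and only if $G_j$ is \emph{not} minimal in the interval $[G_{j-1}, G_{j+1}]$.

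For the local claim I would compare two runs of the interpolation process, both started from $G_{j-1}$: the run carried out inside $G_j$ (whose successive minima are strictly increasing, so its last term $a$ --- which equals $\min(G_j \setminus P)$ for $P$ the minimal flat of rank $\rk(G_j)-1$ in $[G_{j-1}, G_j]$ --- is the Jordan--H\"older element entering $G_j$ in $\mu(C)$), and the run carried out inside $G_{j+1}$ (whose rank-$\rk(G_j)$ term is by definition the minimal flat of $[G_{j-1}, G_{j+1}]$ of that rank, so $G_j$ is minimal in $[G_{j-1},G_{j+1}]$ exactly when $G_j$ occurs in this run). The Jordan--H\"older element leaving $G_j$ in $\mu(C)$ is always $\min(G_{j+1} \setminus G_j)$, regardless of whether $\mu$ interpolates inside $[G_j, G_{j+1}]$, because the first such interpolated flat is the cover of $G_j$ containing $\min(G_{j+1}\setminus G_j)$; hence a descent across $G_j$ means precisely $a > \min(G_{j+1}\setminus G_j)$. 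One then checks that the two runs agree term by term as long as the $G_{j+1}$-run stays inside $G_j$. If it never leaves $G_j$, its rank-$\rk(G_j)$ term is $G_j$ itself (so $G_j$ is minimal), and by strict monotonicity of successive minima its next term $\min(G_{j+1}\setminus G_j)$ exceeds $a$, an ascent. If it first leaves $G_j$ at rank $\rk(G_{j-1})+\ell$ through an element $c$, then $c \notin G_j$ forces $c < a$ while $c \in G_{j+1}\setminus G_j$ forces $c \ge \min(G_{j+1}\setminus G_j)$; hence $\min(G_{j+1}\setminus G_j) < a$, a descent, and $G_j$ does not occur in this run, so it is not minimal in $[G_{j-1},G_{j+1}]$. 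This proves the local claim and hence the proposition.

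The main obstacle is this last step: keeping the two interpolation runs synchronized and using monotonicity of successive minima to identify exactly when they split off, which is precisely when $G_j$ ceases to be minimal with respect to its neighbors. The passage from the proposition to this local claim is a formal consequence of the bijection $\mu,\nu$ already in hand.
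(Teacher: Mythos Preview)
Your argument is correct and follows the same overall strategy as the paper, namely reducing to the equivalence ``$C$ is essential $\iff$ the descent set of $\mu(C)$ is exactly $S$.'' The paper's own proof is a one-line appeal to the proof of Theorem~\ref{hvenum}; what it presumably has in mind is the \emph{uniqueness} of the minimal completion rather than your direct synchronization of interpolation runs. Concretely: if $G_j$ is minimal in $[G_{j-1},G_{j+1}]$, then $\mu(C\setminus\{G_j\})$ is a full chain containing $C$ with descent set contained in $S\setminus\{\rk G_j\}$, so by uniqueness $\mu(C)=\mu(C\setminus\{G_j\})$ and there is an ascent across $G_j$; conversely, an ascent across $G_j$ means $\mu(C)$ is a full chain containing $C\setminus\{G_j\}$ with descent set contained in $S\setminus\{\rk G_j\}$, so by uniqueness $\mu(C)=\mu(C\setminus\{G_j\})$, whence $G_j$ lies on the interpolation from $G_{j-1}$ to $G_{j+1}$ and is minimal there. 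Your hands-on comparison of the two interpolation runs is a valid and more explicit alternative, trading brevity for a self-contained local argument that does not invoke uniqueness.
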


\begin{proof}
From the proof of Thm.~\ref{hvenum}, a chain of flag $S$ is essential if and only if the descent set of its unique minimal completion is $S$.
\end{proof}

Finally, note that since $h_S$ counts the number of valid strings with descent set $S$, $h_k$ counts the number of valid strings with exactly $k$ descents.

\section{Strong and Weak Maps of Matroids}

We now define strong and weak maps of matroids. We refer to \cite[Chapter 8]{Wh} for more information.

\begin{definition}

Let $A$, $B$ be two matroids on the same ground set $E = [n]$.

\begin{enumerate}

\item There is a \emph{strong map} from $A$ to $B$ if $\FF(B) \subseteq \FF(A)$.

\item There is a \emph{weak map} from $A$ to $B$ if $\II(B) \subseteq \II(A)$.
\end{enumerate}
\end{definition}

First, we give a result which explains the nomenclature:

\begin{proposition} 
All strong maps are weak maps.
\end{proposition}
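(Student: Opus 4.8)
The plan is to argue through the closure operators (equivalently the rank functions) of $A$ and $B$, reducing everything to a single comparison lemma: for every $G \subseteq E$ one has $\phi_A(G) \subseteq \phi_B(G)$. This follows directly from the definition of a strong map. Indeed, $\phi_B(G)$ is by construction a flat of $B$ containing $G$, and since a strong map means $\FF(B) \subseteq \FF(A)$, the set $\phi_B(G)$ is also a flat of $A$ containing $G$; as $\phi_A(G)$ is by definition the \emph{smallest} flat of $A$ containing $G$, we conclude $\phi_A(G) \subseteq \phi_B(G)$.

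Granting this lemma, I would prove $\II(B) \subseteq \II(A)$ by induction on the cardinality of a $B$-independent set. If $I \in \II(B)$ with $|I| = 0$, then $I = \emptyset \in \II(A)$. For $|I| \ge 1$, choose $x \in I$ and set $I' = I \setminus \{x\}$; then $I' \in \II(B)$ (a subset of an independent set is independent), so $I' \in \II(A)$ by the inductive hypothesis. Since $I = I' \cup \{x\} \in \II(B)$, the element $x$ is not in the $B$-closure of $I'$, i.e. $x \notin \phi_B(I')$; by the lemma $\phi_A(I') \subseteq \phi_B(I')$, hence $x \notin \phi_A(I')$. Adjoining to an independent set an element outside its closure preserves independence (the rank strictly increases), so $I = I' \cup \{x\} \in \II(A)$, which closes the induction.

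I do not expect a genuine obstacle here; the only point requiring a moment's care is the closure comparison $\phi_A(G) \subseteq \phi_B(G)$, which is just a matter of unwinding the definitions — that $\phi_B(G)$ is a $B$-flat containing $G$ and that strong maps are exactly the inclusions $\FF(B) \subseteq \FF(A)$. Everything else uses only the elementary matroid facts already taken for granted in the paper. One could instead package the same content as the inequality $\rk_A(G) \ge \rk_B(G)$ for all $G \subseteq E$ (apply it to a $B$-independent set $I$: then $|I| = \rk_B(I) \le \rk_A(I) \le |I|$, forcing $I \in \II(A)$), but this still rests on the same induction, so the closure formulation above is the most direct route.
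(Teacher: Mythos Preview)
Your argument is correct. The key lemma $\phi_A(G) \subseteq \phi_B(G)$ is indeed an immediate consequence of $\FF(B) \subseteq \FF(A)$ together with the definition of closure, and the induction on $|I|$ is clean; the step ``$I'$ independent in $A$ and $x \notin \phi_A(I')$ imply $I' \cup \{x\}$ independent in $A$'' is exactly the paper's Definition of flat applied to $F = \phi_A(I')$.

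As for comparison: the paper does not actually prove this proposition. It is stated as a standard fact explaining the nomenclature, with the reader referred to \cite{Wh} for background on strong and weak maps. So there is no approach to compare against; your closure-based argument (or its rank-function reformulation, which you sketch at the end) is the expected textbook proof and would be a perfectly acceptable insertion if the paper wanted one.
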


Note that in neither case are the ranks of $A$ and $B$ assumed to be equal, although we clearly have $\rk(A) \geq \rk(B)$, since a basis for $B$ is independent in $A$. It will often be useful to restrict to the case of rank-preserving weak maps, i.e. those where $\rk(A) = \rk(B)$. In contrast, a strong map can only have $\rk(A) = \rk(B)$ if $A = B$.

When there is a weak map $A \to B$, we will often consider the map $\phi_B: \FF(A) \to \FF(B)$ which is the restriction of the closure map $\phi_B : \PP(B) \to \FF(B)$ from Definition~\ref{def:closure}.
There is a natural extension of this map to chains.

\begin{definition}
If $A \to B$ is a weak map, denote by $\phi_B : \Delta(A) \to \Delta(B)$ the map defined as follows. Given $C = (0_A \varsubsetneq F_1 \varsubsetneq \dots \varsubsetneq F_k \varsubsetneq E)$ a chain in $A$, take $(0_B \subseteq \phi_B (F_1) \subseteq \dots \subseteq \phi_B (F_k) \subseteq E)$, then delete any duplicate flats to obtain $\phi_B (C)$.
\end{definition}

We first observe the following.

\begin{thm} \label{smineq}
If $A \to B$ is a strong map, then $h_k (A) \geq h_k (B)$ for all $0 \leq k \leq \rk(B) - 1$.
\end{thm}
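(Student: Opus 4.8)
The plan is to exhibit, for each $S \subseteq [\rk(B)-1]$, an injection from essential chains of $\Delta(B)$ with flag $S$ to essential chains of $\Delta(A)$ with flag $S$, and then sum over $|S| = k$. Given a strong map $A \to B$ we have $\FF(B) \subseteq \FF(A)$, so every chain of flats in $B$ is literally a chain of flats in $A$ — this is the candidate injection. So the content is entirely in checking that \emph{essentiality is preserved}: if $C$ is an essential chain in $\FF(B)$, then the same chain, viewed in $\FF(A)$, is still essential.

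First I would unwind what essentiality means via Proposition~\ref{hvenum} and the proposition following it: a chain $C$ is essential iff its minimal completion (in the ambient lattice of flats) has descent set equal to $\fl(C)$, equivalently iff no flat of $C$ is minimal in the interval spanned by its neighbors. So suppose $C = (0_B = F_0 \varsubsetneq F_1 \varsubsetneq \dots \varsubsetneq F_k = E)$ is essential in $\FF(B)$ but some $F_i$ becomes minimal in the $A$-interval $[F_{i-1}, F_{i+1}]$; I want a contradiction. The key observation is that the interpolation process defining minimal flats is governed by the \emph{closure operator} and the ordering on $[n]$: $F_i$ is minimal in $[F_{i-1},F_{i+1}]$ iff, writing $\rk(F_i) - \rk(F_{i-1}) = j$, we have $F_i = \phi(F_{i-1} \cup \{a_1,\dots,a_j\})$ where $a_1 = \min(F_{i+1}\setminus F_{i-1})$ and inductively $a_{m+1} = \min(F_{i+1} \setminus \phi(F_{i-1}\cup\{a_1,\dots,a_m\}))$. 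The point is that for a strong map, $\FF(B) \subseteq \FF(A)$ forces $\phi_A(G) \subseteq \phi_B(G)$ for every $G \subseteq E$ (the $A$-closure is contained in the $B$-closure, since any $B$-flat containing $G$ is also an $A$-flat containing $G$). I would use this to argue that the $A$-interpolation "moves faster or equal" inside $[F_{i-1}, F_{i+1}]$: at each step the minimal new element $a_{m+1}$ chosen in $A$ is no larger than the one chosen in $B$, because we are taking the minimum over a \emph{larger} set $F_{i+1}\setminus \phi_A(\cdots) \supseteq F_{i+1}\setminus \phi_B(\cdots)$. Carefully tracking this, if the $A$-interpolation from $F_{i-1}$ reaches $F_i$ in $j$ steps, then the same elements $a_1,\dots,a_j$ (or smaller ones) span $F_i$ over $F_{i-1}$ already in $B$, so $F_i$ was minimal in $[F_{i-1},F_{i+1}]$ in $\FF(B)$ too — contradicting essentiality of $C$ in $B$.

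The main obstacle is precisely this comparison of interpolation processes across the two closure operators: one must rule out the possibility that the $A$-process, by reaching some intermediate rank-$(\rk(F_{i-1})+m)$ flat faster, then "overshoots" and picks up an element of $F_i$ that the $B$-process would not have picked, thereby making $F_i$ $A$-minimal without being $B$-minimal. I expect the clean way to handle this is to compare the full minimal completions of $C$ in $A$ and in $B$ rung-by-rung from the bottom and show by induction on rank that each $A$-minimal flat is contained in the corresponding $B$-minimal flat (using $\phi_A \subseteq \phi_B$ and monotonicity of $\phi$ in both arguments), which pins down that an ascent of the $B$-completion at a given rung forces an ascent of the $A$-completion there as well; hence $\text{descent set of }A\text{-completion} \subseteq \text{descent set of }B\text{-completion}$, and since $C$ essential in $B$ means the latter equals $\fl(C) \supseteq$ the former $\supseteq \fl(C)$ (the completion can only add ranks not in $\fl(C)$ with... wait, descents only occur at new rungs), we get equality, i.e.\ $C$ is essential in $A$. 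Finally, summing the resulting injections over all $S$ with $|S|=k$ and invoking the proposition that $h_S$ counts essential chains of flag $S$ gives $h_k(A) = \sum_{|S|=k} h_S(A) \geq \sum_{|S|=k} h_S(B) = h_k(B)$ for $0 \le k \le \rk(B)-1$, as desired. (The range restriction is automatic since chains in $\Delta(B)$ have flag inside $[\rk(B)-1]$.)
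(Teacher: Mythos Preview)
Your approach is quite different from the paper's. The paper dispatches Theorem~\ref{smineq} in two lines by invoking Stanley's monotonicity theorem for Cohen--Macaulay complexes (Theorem~\ref{thm:mono} via Corollary~\ref{cor:mono}): since $\FF(B)$ is an \emph{induced} subposet of $\FF(A)$ and both order complexes are shellable, hence Cohen--Macaulay, the inequality $h_k(\Delta(B)) \le h_k(\Delta(A))$ is immediate. No combinatorics of essential chains enters at all.

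Your strategy --- inject $B$-essential chains of cardinality $k$ into $A$-essential chains of cardinality $k$ via the identity --- is a legitimate elementary alternative, and the key lemma you isolate (``$F$ $A$-minimal in $[G,H]$ $\Rightarrow$ $F$ $B$-minimal in $[G,H]$'') is indeed true. However, your proposed proof of this lemma has a real gap. The ``rung-by-rung'' comparison of the full minimal completions of $C$ in $A$ and in $B$ cannot work as you describe it: for a nontrivial strong map one has $\rk(A) > \rk(B)$, so the two completions have \emph{different lengths}, and there is no coherent way to match rungs or to compare descent sets (which are subsets of $[\rk(A)-1]$ and $[\rk(B)-1]$ respectively). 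In particular, the chain of inclusions ``$\fl(C) \supseteq$ former $\supseteq \fl(C)$'' at the end does not parse, since $\fl(C)$ itself means different things in the two lattices.

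What does work is the following direct argument inside a single interval $[G,H]$. Your observation $G_m^A \subseteq G_m^B$ (induction using $\phi_A \subseteq \phi_B$) is correct. Now suppose $F = G_{j'}^A$ where $j' = \rk_A(F)-\rk_A(G)$, and suppose some $a_m^B \notin F$ for $m \le j := \rk_B(F)-\rk_B(G)$. Then $a_m^B \in H \setminus G_{j'}^A$, hence $a_m^B > a_{j'}^A > \dots > a_1^A$. Since $a_m^B = \min(H \setminus G_{m-1}^B)$, all of $a_1^A,\dots,a_{j'}^A$ lie in $G_{m-1}^B$, so
\[
G_{m-1}^B \ \supseteq\ \phi_B\bigl(G \cup \{a_1^A,\dots,a_{j'}^A\}\bigr) \ \supseteq\ \phi_A\bigl(G \cup \{a_1^A,\dots,a_{j'}^A\}\bigr) \ =\ F,
\]
contradicting $\rk_B(G_{m-1}^B) = \rk_B(G)+m-1 < \rk_B(F)$. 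Thus $\{a_1^B,\dots,a_j^B\} \subseteq F$, whence $G_j^B \subseteq F$, and equality of $B$-ranks gives $G_j^B = F$. (At the bottom interval one takes $G = 0_A$ on the $A$-side and $G = 0_B$ on the $B$-side; since $0_A \subseteq 0_B$, the base case $G_0^A \subseteq G_0^B$ still holds and the same argument applies.) With this lemma in hand your injection goes through, remembering that the image chain may have a different flag in $A$ but the same cardinality, so one concludes $h_k(A) \ge h_k(B)$ after summing over all flags of size $k$.
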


For the proof of this, we use the following monotonicity theorem by Stanley:

\begin{thm}[\cite{S1}] \label{thm:mono}
Let $\Delta'$ be a subcomplex of the simplicial complex $\Delta$, where both are Cohen-Macaulay. Suppose that $e-1 = \dim \Delta' \leq \dim \Delta = d-1$, and that no set of $e+1$ vertices of $\Delta'$ form a face of $\Delta$. Then $h_k (\Delta') \leq h_k (\Delta)$ for all $k$.
\end{thm}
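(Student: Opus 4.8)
The plan is to realize the inequality as a comparison of Hilbert functions of Artinian reductions of Stanley--Reisner rings, linked by a well-chosen degree-preserving surjection. I would work over a fixed infinite field $k$; since Cohen--Macaulayness is preserved under field extension and $h$-vectors are combinatorial, there is no loss in assuming $k$ infinite. View $\Delta'$ as a subcomplex of $\Delta$ on the vertex set $V := V(\Delta)$, put $S = k[x_v : v \in V]$, and let $k[\Delta] = S/I_\Delta$, $k[\Delta'] = S/I_{\Delta'}$ be the Stanley--Reisner rings. Every non-face of $\Delta$ is a non-face of $\Delta'$, so $I_\Delta \subseteq I_{\Delta'}$ and there is a degree-preserving surjection $\rho \colon k[\Delta] \twoheadrightarrow k[\Delta']$. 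Because $\Delta$ is Cohen--Macaulay of dimension $d-1$, the ring $k[\Delta]$ is Cohen--Macaulay of Krull dimension $d$, and for any linear system of parameters (l.s.o.p.) $\Theta = (\theta_1, \dots, \theta_d)$ one has $\dim_k (k[\Delta]/\Theta)_j = h_j(\Delta)$; likewise $\dim_k (k[\Delta']/\Theta')_j = h_j(\Delta')$ for an l.s.o.p. $\Theta'$ of $k[\Delta']$, which has Krull dimension $e$. So it suffices to produce, for a suitable choice of $\Theta$ and $\Theta'$, a degree-preserving surjection $k[\Delta]/\Theta \twoheadrightarrow k[\Delta']/\Theta'$.

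The heart of the argument is choosing $\Theta$ to respect $\rho$. Let $W := V(\Delta')$ and $U := \spane(x_v : v \in V \setminus W) \subseteq S_1$; this is exactly the degree-$1$ part of $I_{\Delta'}$, so $\rho$ annihilates $U$, and $k[\Delta]/(U) = k[\Delta|_W]$, the Stanley--Reisner ring of the restriction $\Delta|_W$. Here the hypothesis enters: since $\Delta' \subseteq \Delta|_W$ and $\dim \Delta' = e-1$ we always have $\dim \Delta|_W \geq e-1$, and the assumption that no $(e+1)$-subset of $W$ is a face of $\Delta$ says exactly that $\dim \Delta|_W \leq e-1$; hence $\dim \Delta|_W = e-1$ and $\dim k[\Delta|_W] = e$. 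I would then pick linear forms $\theta_1, \dots, \theta_e \in S_1$ generically, so that simultaneously (i) their images in $k[\Delta]$ form part of an l.s.o.p. (as generic linear forms over an infinite field always do), whence $k[\Delta]/(\theta_1, \dots, \theta_e)$ is Cohen--Macaulay of dimension $d-e$; and (ii) writing $\bar\theta_i$ for the image of $\theta_i$ in $k[\Delta|_W]$, the $\bar\theta_i$ form an l.s.o.p. of $k[\Delta|_W]$ (its dimension is $e$, and $S_1 \twoheadrightarrow k[\Delta|_W]_1$ sends generic forms to generic forms). Each of (i), (ii) holds on a nonempty Zariski-open subset of $S_1^{\,e}$, so both hold for a common choice. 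From (ii), $k[\Delta]/\big((\theta_1, \dots, \theta_e) + (U)\big) = k[\Delta|_W]/(\bar\theta_1, \dots, \bar\theta_e)$ is Artinian, so the image of $U$ in the $(d-e)$-dimensional Cohen--Macaulay ring $k[\Delta]/(\theta_1, \dots, \theta_e)$ generates a primary ideal; by prime avoidance I can choose $\theta_{e+1}, \dots, \theta_d \in U$ whose images form an l.s.o.p. of $k[\Delta]/(\theta_1, \dots, \theta_e)$, and then $\Theta := (\theta_1, \dots, \theta_d)$ is an l.s.o.p. of $k[\Delta]$.

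To finish, set $\Theta' := (\rho(\theta_1), \dots, \rho(\theta_e))$. Since $\theta_{e+1}, \dots, \theta_d \in U \subseteq \ker \rho$, we get $\rho(\Theta \cdot k[\Delta]) = \Theta' \cdot k[\Delta']$, so $\rho$ descends to a degree-preserving surjection $\bar\rho \colon k[\Delta]/\Theta \twoheadrightarrow k[\Delta']/\Theta'$. Also $\Theta'$ is an l.s.o.p. of $k[\Delta']$: $\rho$ factors as $k[\Delta] \to k[\Delta|_W] \to k[\Delta']$, the $\bar\theta_i$ are an l.s.o.p. of $k[\Delta|_W]$ by (ii), so their images generate an $\mathfrak{m}$-primary ideal of the quotient $k[\Delta']$, and $\dim k[\Delta'] = e$ means $e$ parameters are enough. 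Comparing Hilbert functions through the surjection $\bar\rho$ degree by degree yields $h_j(\Delta) = \dim_k(k[\Delta]/\Theta)_j \geq \dim_k(k[\Delta']/\Theta')_j = h_j(\Delta')$ for all $j$, which is the assertion (with both sides $0$ for $j$ outside the relevant ranges).

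I expect the main obstacle to be the genericity bookkeeping in the second paragraph: checking that a single generic choice of $\theta_1, \dots, \theta_e$ can serve both as a partial l.s.o.p. of $k[\Delta]$ and as a full l.s.o.p. of $k[\Delta|_W]$, and that the completion to an l.s.o.p. of $k[\Delta]$ can be kept inside $U = \ker\rho \cap S_1$. The conceptual point that makes this routine is isolating the content of the hypothesis as the single equality $\dim k[\Delta|_W] = e$; after that, everything reduces to standard Cohen--Macaulay Hilbert-series arithmetic and prime avoidance over an infinite field.
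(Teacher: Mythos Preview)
The paper does not give its own proof of this statement: Theorem~\ref{thm:mono} is quoted from Stanley~\cite{S1} and used as a black box (to derive Corollary~\ref{cor:mono}, Theorem~\ref{smineq}, and the independence-complex result). So there is no in-paper argument to compare against.

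Your proposed proof is correct and is essentially Stanley's original argument. The key step you have identified is exactly the right one: the hypothesis that no $(e{+}1)$-subset of $W=V(\Delta')$ is a face of $\Delta$ is equivalent to $\dim k[\Delta|_W]=e$, and this is what allows you to split an l.s.o.p.\ of $k[\Delta]$ as $e$ generic forms (which simultaneously cut $k[\Delta|_W]$ down to an Artinian ring) plus $d-e$ forms drawn from $U=\spane(x_v:v\notin W)\subseteq\ker\rho$, so that the surjection $k[\Delta]\twoheadrightarrow k[\Delta']$ descends modulo parameters. Two small points worth making explicit in a final write-up: (a) the ``prime avoidance'' you invoke is the vector-space version over an infinite field (a finite-dimensional $k$-vector space is not a finite union of proper subspaces), which is needed because you want $\theta_{e+1},\dots,\theta_d$ to lie in the linear space $U$ itself, not merely in the ideal it generates; and (b) Cohen--Macaulayness of $k[\Delta]/(\theta_1,\dots,\theta_e)$ is not needed to \emph{choose} the remaining parameters, only the dimension count $\dim=d-e$ is; Cohen--Macaulayness of $k[\Delta]$ and $k[\Delta']$ is what converts the two Artinian Hilbert functions into the respective $h$-vectors at the end.
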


\begin{cor} \label{cor:mono}
Let $P$ be a poset and $Q$ be an induced subposet of $P$ such that $\Delta(P)$ and $\Delta(Q)$ are both Cohen-Macaulay. Then $h_k(\Delta(Q)) \le h_k(\Delta(P))$ for all $k$.
\end{cor}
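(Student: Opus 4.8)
The plan is to deduce this from Stanley's monotonicity theorem (Theorem~\ref{thm:mono}) by verifying its hypotheses for the pair of order complexes $\Delta(Q) \subseteq \Delta(P)$. First I would observe that since $Q$ is an \emph{induced} subposet of $P$, a chain in $Q$ is precisely a chain in $P$ all of whose elements lie in $Q$; hence $\Delta(Q)$ is genuinely a subcomplex of $\Delta(P)$, and moreover a subcomplex \emph{induced} on the vertex set $Q \subseteq P$. Both complexes are Cohen--Macaulay by hypothesis, so two of the three conditions of Theorem~\ref{thm:mono} are already in place; the only remaining point is the combinatorial condition on faces.

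Set $e - 1 = \dim \Delta(Q)$, so that $e$ is the maximal size of a chain in $Q$. The hypothesis to check is that no set of $e+1$ vertices of $\Delta(Q)$ forms a face of $\Delta(P)$. But a face of $\Delta(P)$ is a chain of $P$, and a set of $e+1$ vertices of $\Delta(Q)$ is a set of $e+1$ elements of $Q$; if these formed a chain in $P$, then — since they all lie in $Q$ and $Q$ is induced — they would form a chain in $Q$ of size $e+1$, contradicting the definition of $e$. So the condition holds vacuously from the definition of $e$ as the largest chain size in $Q$, and Theorem~\ref{thm:mono} applies to give $h_k(\Delta(Q)) \le h_k(\Delta(P))$ for all $k$.

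There is essentially no obstacle here: the corollary is a direct translation of Theorem~\ref{thm:mono} into poset language, and the ``induced'' hypothesis on $Q$ is exactly what is needed to both make $\Delta(Q)$ a subcomplex and force the face condition. The one thing worth a sentence of care is the degenerate case where $Q$ is empty or a single element (so $\Delta(Q)$ is the void or empty complex); there the inequality is immediate since $h_0 = 1$ and all higher $h_k$ vanish, and in any case Stanley's theorem is typically stated so as to accommodate this. I would therefore present the proof as: note $\Delta(Q)$ is an induced subcomplex of $\Delta(P)$ on vertex set $Q$, invoke the Cohen--Macaulay hypotheses, observe the face condition follows from maximality of chain length in $Q$, and apply Theorem~\ref{thm:mono}.
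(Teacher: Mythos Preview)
Your proposal is correct and follows essentially the same approach as the paper: both verify the face condition of Theorem~\ref{thm:mono} using the fact that $Q$ is an induced subposet, so that any subset of $Q$ which is a chain in $P$ is already a chain in $Q$. The paper's proof is a one-line version of your argument.
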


\begin{proof}
Since $Q$ is an induced subposet of $P$, any subset of $Q$ which is not a chain in $Q$ is also not a chain in $P$, so Theorem~\ref{thm:mono} applies.
\end{proof}

See Section~\ref{sec:SR} for the definition of Cohen-Macaulay. In particular, order complexes of lattices of flats of matroids are shellable and thus Cohen-Macaulay \cite{B2}.

\begin{proof}[Proof of Theorem~\ref{smineq}]
We have that $\FF(B)$ is a subposet of $\FF(A)$ by definition of a strong map, and it is an induced subposet because the relation in both posets is set inclusion. Thus Corollary~\ref{cor:mono} gives the result.
\end{proof}

Note that since every $f_k$ is a nonnegative sum of the $h_j$, we have the following.

\begin{cor}
If $A \to B$ is a strong map, then $f_k (A) \geq f_k (B)$ for all $0 \leq k \leq \rk(B)$.
\end{cor}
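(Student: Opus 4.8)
The plan is to obtain this as a formal consequence of Theorem~\ref{smineq} together with the linear identities in Proposition~\ref{fh-vectors}, as the placement of the corollary suggests. Write $r_M = \rk(M)-1$ for the rank of $\Delta(M)$ as a poset. Combining parts (1), (3) and (2) of Proposition~\ref{fh-vectors} and using that a fixed $j$-subset of $[r_M]$ is contained in exactly $\binom{r_M-j}{k-j}$ of the $k$-subsets of $[r_M]$, one gets
\[
f_k(M) \;=\; \sum_{\substack{S\subseteq[r_M]\\ |S|=k}} \; \sum_{T\subseteq S} h_T(M) \;=\; \sum_{j=0}^{k} \binom{r_M-j}{k-j}\, h_j(M).
\]
So $f_k(M)$ is a nonnegative integer combination of $h_0(M),\dots,h_k(M)$, which is exactly the assertion quoted just before the corollary.

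Next I would run the comparison. Set $r_A = \rk(A)-1$ and $r_B = \rk(B)-1$; since a weak map (in particular a strong map) forces $\rk(A)\ge\rk(B)$, we have $r_A\ge r_B$. Fix $k$ with $0\le k\le \rk(B)-1 = r_B$. For each $0\le j\le k$ we have $\binom{r_A-j}{k-j}\ge\binom{r_B-j}{k-j}\ge 0$, while Theorem~\ref{smineq} gives $h_j(A)\ge h_j(B)$, and $h_j(B)\ge 0$ since $\Delta(B)$ is Cohen--Macaulay (equivalently, $h_j(B)$ is a count of essential chains). Multiplying these inequalities term by term and summing over $j$ in the displayed formula yields $f_k(A)\ge f_k(B)$. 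For the remaining value $k=\rk(B)$, the complex $\Delta(B)$ has dimension $\rk(B)-2$ and hence no face of cardinality $\rk(B)$, so $f_{\rk(B)}(B)=0\le f_{\rk(B)}(A)$ and there is nothing to prove.

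I do not expect a genuine obstacle here: the result is essentially a restatement of Theorem~\ref{smineq}. The one point needing attention is that the coefficients expressing $f_k$ through the $h_j$ depend on the poset rank, so one cannot compare the two expansions term by term until one observes that passing from $B$ to $A$ only enlarges these (nonnegative) binomial coefficients. As a side remark, this corollary also admits a proof with no reference to $h$-vectors at all: $\FF(B)\setminus\{0_B,E\}$ is an induced subposet of $\FF(A)\setminus\{0_A,E\}$ (any flat strictly between $0_B$ and $E$ lies in $\FF(A)$ and is neither $0_A$ nor $E$), so every chain of the former is a chain of the latter and $f_k(\Delta(B))\le f_k(\Delta(A))$ follows directly; but the $h$-vector derivation is the one in keeping with the surrounding discussion.
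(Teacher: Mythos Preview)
Your proof is correct and takes the same approach as the paper, which simply notes that each $f_k$ is a nonnegative combination of the $h_j$ and invokes Theorem~\ref{smineq}. You are in fact more careful than the paper's one-line justification: you correctly address that the binomial coefficients in this combination depend on the rank of the complex, observe that they only increase when passing from $B$ to $A$, and use the nonnegativity of $h_j(B)$ to conclude---a point the paper glosses over.
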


We now consider weak maps and flag vectors. Since flag vectors are defined in terms of the rank of the matroid, there are few comparisons that can be made unless the two matroids have the same rank. Therefore, we consider only rank-preserving weak maps. The following proposition (together with the already-discussed case of strong maps) shows it suffices to consider only this case.

\begin{proposition} \label{mapdecomp} 
Every weak map of matroids can be decomposed into a strong map and a rank-preserving weak map.
\end{proposition}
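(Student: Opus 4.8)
The plan is to factor the weak map $A \to B$ through the truncation of $A$ to the rank of $B$. Write $r = \rk(B)$; since $A \to B$ is a weak map we have $\II(B) \subseteq \II(A)$, and hence $\rk(A) \geq r$ because a basis of $B$ is independent in $A$. Define a matroid $C$ on $[n]$ by
\[
\II(C) = \{ I \in \II(A) : |I| \leq r \}.
\]
The first step is to check that this is a matroid: properties (1) and (3) of the definition are immediate, and the exchange property (2) is inherited from $A$, since augmenting an independent set of $C$ of size $< r$ by one element produces a set of size $\leq r$ that is still independent in $A$, hence in $C$. Consequently $\rk(C) = \min(\rk(A), r) = r$, and more generally $\rk_C(X) = \min(\rk_A(X), r)$ for all $X \subseteq [n]$.

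Next I would verify that $A \to C$ is a strong map, i.e.\ that $\FF(C) \subseteq \FF(A)$. Using $\rk_C(X) = \min(\rk_A(X), r)$, one splits into cases according to whether $\rk_A(F) < r$ or $\rk_A(F) \geq r$: in the first case $\rk_C$ agrees with $\rk_A$ on $F$ and on each of its one-element extensions (such an extension has $A$-rank at most $r$), so $F$ is a flat of $C$ exactly when it is a flat of $A$; in the second case $\rk_C(F) = r$ is maximal, so no one-element extension raises $\rk_C$, and $F$ can be a flat of $C$ only if $F = [n]$. Hence $\FF(C) = \{ F \in \FF(A) : \rk_A(F) < r \} \cup \{[n]\}$, which is contained in $\FF(A)$ since $[n]$ is a flat of $A$. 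This shows $A \to C$ is a strong map.

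Finally, $C \to B$ is a weak map: every independent set of $B$ has cardinality at most $\rk(B) = r$ and is independent in $A$ (because $A \to B$ is a weak map), hence belongs to $\II(C)$, so $\II(B) \subseteq \II(C)$; and since $\rk(C) = r = \rk(B)$, this weak map is rank-preserving. Therefore $A \to C \to B$ exhibits the desired decomposition. I do not anticipate a genuine obstacle here; the only point requiring a little care is the flat computation in the middle step, where one must keep the two rank regimes separate and remember that $[n]$ itself is always a flat. (Note that when $\rk(A) = \rk(B)$ one simply gets $C = A$ and the strong map is the identity, which is consistent with the statement.)
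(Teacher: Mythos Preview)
Your argument is correct. The paper actually states this proposition without proof, treating it as a standard fact; your factorization through the truncation $C$ of $A$ to rank $\rk(B)$ is exactly the classical construction one would use to justify it, and all three verifications (that $C$ is a matroid, that $A \to C$ is strong via $\FF(C) = \{F \in \FF(A) : \rk_A(F) < r\} \cup \{[n]\}$, and that $C \to B$ is a rank-preserving weak map) are carried out cleanly.
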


We will prove one more combinatorial result. We first prove the following lemma:

\begin{lem} \label{phidecr}
Let $A \rightarrow B$ be a weak map.

\begin{enumerate}
\item If $G \subseteq E$, then $\rk_A (G) \geq \rk_B (G)$.

\item $\phi_B : \FF(A) \rightarrow \FF(B)$ does not increase a flat's rank.
\end{enumerate}
\end{lem}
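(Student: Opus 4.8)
The plan is to read off both statements directly from the definition of a weak map, $\II(B) \subseteq \II(A)$, combined with the two equivalent descriptions of rank (largest independent subset / smallest flat containing the set).

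For part (1), I would take a $B$-independent subset $I \subseteq G$ of maximum size, so $|I| = \rk_B(G)$. Since $I \in \II(B) \subseteq \II(A)$, the set $I$ is an $A$-independent subset of $G$, and therefore $\rk_A(G) \ge |I| = \rk_B(G)$. That is the whole argument.

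For part (2), let $F \in \FF(A)$; we must show $\rk_B(\phi_B(F)) \le \rk_A(F)$. Using that the rank of a set equals the rank of the smallest flat containing it, and that $\phi_B(F)$ is by definition the smallest $B$-flat containing $F$, we get $\rk_B(\phi_B(F)) = \rk_B(F)$. Now apply part (1) with $G = F$ to obtain $\rk_B(F) \le \rk_A(F)$, and note that since $F$ is an $A$-flat, $\rk_A(F)$ is exactly the rank of $F$ viewed as an element of $\FF(A)$. Chaining these gives the claim.

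There is no real obstacle here; the only point requiring a little care is to stay consistent about which characterization of rank is being used, so that replacing a set by its $B$-closure does not change the $B$-rank, after which everything follows from the inclusion of independence systems.
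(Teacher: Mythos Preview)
Your proof is correct and matches the paper's argument essentially line for line: for (1) you take a maximum $B$-independent subset of $G$ and note it is $A$-independent, and for (2) you use $\rk_B(\phi_B(F)) = \rk_B(F)$ and then invoke (1). There is nothing to add.
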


\begin{proof}
(1) Let $I$ be a $B$-basis for $G$. $I$ is also independent in $A$, so the largest $A$-independent set contained in $G$ is at least size $|I|$.

(2) Note that for all $G \subseteq E, \rk_B(G) = \rk_B (\phi_B (G))$. Substituting this into the inequality from (1) gives the result.
\end{proof}

We now show the following:

\begin{proposition} \label{phisurj}
Let $A \rightarrow B$ be a rank-preserving weak map. Then,

\begin{enumerate}
\item $\phi_B: \mathcal{F} (A) \rightarrow \mathcal{F} (B)$ is surjective.

\item $\phi_B: \mathcal{F} (A) \rightarrow \mathcal{F} (B)$ is surjective by rank: Given $F \in \mathcal{F}(B)$, there exists $G \in \mathcal{F}(A)$ with $\phi_B (G) = F$ and $\rk(G) = \rk(F)$.

\item $\phi_B: \Delta(A) \to \Delta(B)$ is surjective by flag: Given $C \in \Delta(B)$, there exists $D \in \Delta(A)$ with $\phi_B (D) = C$ and $\fl(D) = \fl(C)$.
\end{enumerate}
\end{proposition}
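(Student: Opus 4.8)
The plan is to prove (1), (2), (3) in increasing order of strength; since a surjection by rank is in particular a surjection, (1) is immediate from (2), so the work is in (2) and (3). The engine behind both is the same: rather than trying to preimage a $B$-flat $F$ directly, I take the $A$-closure of a $B$-basis of $F$ and use Lemma~\ref{phidecr} to pin down its $B$-image by a rank count. Throughout I will use the elementary facts that, for flats of a fixed matroid, strict containment forces a strict increase of rank (extend a basis of the smaller flat by an element of the larger one), and hence that a flat contained in another flat of rank no larger than its own must equal it.

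For (2), let $F \in \FF(B)$ and set $k = \rk_B(F)$. Choose a $B$-basis $I$ of $F$, so $|I| = k$ and $\phi_B(I) = F$. Because $A \to B$ is a weak map, $I \in \II(B) \subseteq \II(A)$, so $G := \phi_A(I) \in \FF(A)$ has $\rk_A(G) = |I| = k$. From $I \subseteq G$ and monotonicity of the closure map, $F = \phi_B(I) \subseteq \phi_B(G)$; on the other hand Lemma~\ref{phidecr}(2) gives $\rk_B(\phi_B(G)) \le \rk_A(G) = k = \rk_B(F)$. A $B$-flat containing $F$ with rank no larger than $\rk_B(F)$ must be $F$, so $\phi_B(G) = F$, and we already have $\rk_A(G) = \rk_B(F)$. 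This proves (2), and with it (1).

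For (3), write the given chain as $C = (F_1 \varsubsetneq \dots \varsubsetneq F_m)$ in $\FF(B) \setminus \{0_B, E\}$ and put $k_i = \rk_B(F_i)$, so $1 \le k_1 < \dots < k_m \le r - 1$. By repeated use of the independence augmentation axiom I build an ordered $B$-independent set $b_1, \dots, b_{k_m}$ whose initial segment $\{b_1, \dots, b_{k_i}\}$ is a $B$-basis of $F_i$ for every $i$: take a $B$-basis of $F_1$, extend it within $F_2$ to a $B$-basis of $F_2$, and so on. Running the argument of (2) on each initial segment $\{b_1, \dots, b_{k_i}\}$ (which lies in $\II(A)$ and $B$-spans $F_i$) yields flats $G_i := \phi_A(\{b_1, \dots, b_{k_i}\}) \in \FF(A)$ with $\phi_B(G_i) = F_i$ and $\rk_A(G_i) = k_i$. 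Monotonicity of $\phi_A$ gives $G_1 \subseteq \dots \subseteq G_m$, and since the ranks $k_i$ strictly increase these containments are strict; moreover $\rk_A(G_i) = k_i$ with $0 < k_i < r$ shows each $G_i$ is a flat of $A$ different from $0_A$ and $E$. Hence $D := (G_1 \varsubsetneq \dots \varsubsetneq G_m)$ is a chain in $\Delta(A)$ with $\fl(D) = \{k_i\}_i = \{\rk_B(F_i)\}_i = \fl(C)$, and since the $\phi_B(G_i) = F_i$ are pairwise distinct and distinct from $0_B$ and $E$, no flats are deleted when forming $\phi_B(D)$, so $\phi_B(D) = C$.

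The only place anything genuinely happens is the reverse inclusion $\phi_B(G) \subseteq F$ in (2): this is precisely where the weak-map hypothesis enters, through the rank inequality of Lemma~\ref{phidecr}. Everything else — the augmentation construction of a flag-compatible basis in (3), the strictness of the chain, and the verification that the image chain is exactly $C$ rather than some coarsening of it — is bookkeeping, and the main thing to watch is invoking the ``containment plus equal rank implies equality'' property of flats at the right moments.
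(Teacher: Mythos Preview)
Your proof is correct and follows essentially the same approach as the paper: build a nested $B$-independent set whose initial segments span the flats of $C$, take $A$-closures, and use Lemma~\ref{phidecr} together with ``containment plus equal rank implies equality'' to force $\phi_B(G_i)=F_i$. The only cosmetic difference is that the paper obtains its ordered basis from the Jordan--H\"older sequence of the minimal completion of $C$ and proves (3) directly (deducing (1) and (2) as special cases), whereas you build the basis by iterated augmentation and prove (2) first before reusing it in (3); the underlying argument is identical.
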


\begin{proof}
It suffices to prove (3). Let $C \in \Delta(B)$ be of flag $S$, and let $b_1 \dots b_{r+1}$ be the ordered basis for its minimal completion. That is, if $k \in S$, then $F_k := \phi_B (\{b_1, \dots, b_k\})$ is the rank-$k$ flat in $C$. Now let $D$ be the chain of flag $S$ whose rank-$k$ flat is $G_k := \phi_A (\{b_1, \dots, b_k\})$. We know $\phi_B (G_k)$ is a $B$-flat which contains $\{b_1, \dots, b_k\}$, and therefore $\phi_B (G_k) \supseteq F_k$. However, by the lemma, $\rk_B (\phi_B (G_k)) \leq rk_A (G_k) = k$. Thus $\phi_B (G_k) = F_k$.
\end{proof}

(3) immediately implies the following:

\begin{cor}
\
\begin{enumerate}
\item If $A \to B$ is a rank-preserving weak map with both matroids of rank $r+1$, then $f_S (A) \geq f_S (B) \ \forall S \subseteq [r]$.

\item If $A \rightarrow B$ is a weak map, then $f_k (A) \geq f_k (B)$ for all $k$.
\end{enumerate}
\end{cor}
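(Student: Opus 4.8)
The plan is to deduce both statements formally from Proposition~\ref{phisurj}(3), Proposition~\ref{mapdecomp}, and the inequalities already established for strong maps.

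For (1), fix $S \subseteq [r]$ and let $X_S(A)$, $X_S(B)$ denote the sets of chains of flag $S$ in $\Delta(A)$, $\Delta(B)$ respectively, so that $f_S(A) = |X_S(A)|$ and $f_S(B) = |X_S(B)|$. The construction in the proof of Proposition~\ref{phisurj}(3) is canonical: to a chain $C \in X_S(B)$ with minimal completion having ordered basis $b_1 \dots b_{r+1}$, it assigns the chain $D \in X_S(A)$ whose rank-$k$ flat is $\phi_A(\{b_1,\dots,b_k\})$ for $k \in S$, and this $D$ satisfies $\phi_B(D) = C$. Since $C$ is recovered from $D$ by applying $\phi_B$, the assignment $C \mapsto D$ is an injection $X_S(B) \hookrightarrow X_S(A)$, whence $f_S(B) \le f_S(A)$.

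For (2), first suppose $A \to B$ is rank-preserving, say both of rank $r+1$. Then part (1) together with Proposition~\ref{fh-vectors}(1) gives
\[
f_k(A) = \sum_{|S| = k} f_S(A) \ \geq\ \sum_{|S| = k} f_S(B) = f_k(B)
\]
for every $k$ (with $f_k = 0$ on both sides once $k$ exceeds the relevant rank). For a general weak map $A \to B$, use Proposition~\ref{mapdecomp} to factor it as a strong map $A \to C$ followed by a rank-preserving weak map $C \to B$, so that $\rk(C) = \rk(B)$. The corollary to Theorem~\ref{smineq} gives $f_k(A) \ge f_k(C)$ for all $k$, and the rank-preserving case just treated gives $f_k(C) \ge f_k(B)$ for all $k$; chaining these yields $f_k(A) \ge f_k(B)$.

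This is a formal consequence of the preceding results, so I do not anticipate a genuine obstacle. The only points requiring care are that the map supplied by Proposition~\ref{phisurj}(3) really is a \emph{section} (hence injective), not merely a surjection onto $X_S(B)$, and that the rank bookkeeping in the factorization of Proposition~\ref{mapdecomp} is consistent so that both the strong-map corollary and the rank-preserving case apply to the intermediate matroid $C$.
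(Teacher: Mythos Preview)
Your argument is correct. For part (1) it matches the paper exactly: the paper simply remarks that Proposition~\ref{phisurj}(3) ``immediately implies'' the corollary, and your observation that the explicit section $C \mapsto D$ built there is injective (because $\phi_B(D) = C$ recovers $C$) is precisely how one cashes out that one-liner.

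For part (2) you take a slightly different path from the paper. The paper again points only to Proposition~\ref{phisurj}(3); the intended reading is presumably that the very same construction $G_k = \phi_A(\{b_1,\dots,b_k\})$ works for an arbitrary weak map, producing a chain in $\Delta(A)$ of the \emph{same cardinality} as $C$ (the $G_k$ are distinct $A$-flats of $A$-rank $k$, strictly between $0_A$ and $E$ since $\rk(A) \ge \rk(B)$), even though the flags need not match when ranks differ. Your route instead invokes Proposition~\ref{mapdecomp} to factor through a truncation, then applies the strong-map corollary and the rank-preserving case. Both are valid; yours has the virtue of staying strictly within what is literally stated (since Proposition~\ref{phisurj} is only asserted for rank-preserving maps), at the cost of citing one extra ingredient. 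Your caveat about the order of the factorization is well placed: one needs $A \to C$ strong and $C \to B$ rank-preserving weak, which is what truncating $A$ to $\rk(B)$ provides.
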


We will strengthen this result in Thm.~\ref{finalthm}.

\section{The Independence Complex}

Before proceeding further on lattices of flats, we consider the independence complex.

\begin{definition}
The \emph{independence complex} $\Delta_I (M)$ of a matroid $M$ is the simplicial complex whose faces are given by $\mathcal{I}(M)$.
\end{definition}

Denote the $f$- and $h$-vectors of $\Delta_I (M)$ by $h^I (M)$ and $f^I (M)$ respectively.

One immediate consequence of the definitions is that if $A \to B$ is a weak map, then the identity map provides an injection of $\mathcal{I} (B)$ into $\mathcal{I} (A)$, which in turn implies that $f_k ^I (A) \geq f_k ^I (B)$ for all $k$. However, we also have the following stronger result.



\begin{proposition}
If $A \to B$ is a rank-preserving weak map, then $h_k ^I (A) \geq h_k ^I (B)$ for all $k$.
\end{proposition}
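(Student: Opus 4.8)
The plan is to deduce this from Stanley's monotonicity theorem, Theorem~\ref{thm:mono}, exactly as remark (2) of Theorem~\ref{summarythm} is proved. First I would observe that a rank-preserving weak map $A \to B$ gives an inclusion of simplicial complexes $\Delta_I(B) \subseteq \Delta_I(A)$, since $\mathcal{I}(B) \subseteq \mathcal{I}(A)$ by definition of a weak map. Moreover this is an inclusion of complexes \emph{on the same vertex set} (or at least with $\Delta_I(B)$ on a subset of the vertices of $\Delta_I(A)$, after discarding loops), so in particular every vertex of $\Delta_I(B)$ is a vertex of $\Delta_I(A)$. The key dimension count is that $\dim \Delta_I(B) = \rk(B) - 1 = \rk(A) - 1 = \dim \Delta_I(A)$, using precisely the rank-preserving hypothesis; call this common value $d-1$, so $e = d$ in the notation of Theorem~\ref{thm:mono}.

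Next I would check the hypotheses of Theorem~\ref{thm:mono}. Both independence complexes are matroid complexes, hence shellable and therefore Cohen--Macaulay; this is classical (see \cite{Ox} or the discussion preceding Corollary~\ref{cor:mono}). Since $e = d$, the condition ``no set of $e+1$ vertices of $\Delta_I(B)$ forms a face of $\Delta_I(A)$'' becomes ``no set of $d+1$ vertices of $\Delta_I(B)$ is a face of $\Delta_I(A)$,'' which holds automatically because $\Delta_I(A)$ has dimension $d-1$ and so contains no face of cardinality $d+1$ at all. Thus Theorem~\ref{thm:mono} applies verbatim and yields $h_k(\Delta_I(B)) \leq h_k(\Delta_I(A))$ for all $k$, which is the claim.

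There is no real obstacle here; the only point requiring care is the rank-preserving hypothesis, which is exactly what is needed to force the two complexes to have the same dimension so that the ``no extra face'' condition in Stanley's theorem is vacuous. If the ranks differed, $\Delta_I(B)$ would sit inside $\Delta_I(A)$ with strictly smaller dimension, and one would have to verify the genuinely restrictive condition that no $(e+1)$-subset of a basis of $B$ is independent in $A$ — which is false in general — so the rank-preserving assumption cannot be dropped. I would close by noting that this is the proof of part (2) of Theorem~\ref{summarythm}.

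\begin{proof}
Since $A \to B$ is a weak map, $\mathcal{I}(B) \subseteq \mathcal{I}(A)$, so $\Delta_I(B)$ is a subcomplex of $\Delta_I(A)$. Both complexes are matroid complexes, hence shellable and therefore Cohen--Macaulay. Because the weak map is rank-preserving, $\dim \Delta_I(B) = \rk(B) - 1 = \rk(A) - 1 = \dim \Delta_I(A)$; write $d - 1$ for this common dimension, so that in the notation of Theorem~\ref{thm:mono} we have $e = d$. The hypothesis that no set of $e + 1 = d+1$ vertices of $\Delta_I(B)$ forms a face of $\Delta_I(A)$ holds vacuously, since $\Delta_I(A)$ has dimension $d - 1$ and hence no face of cardinality $d + 1$. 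Applying Theorem~\ref{thm:mono} gives $h_k^I(B) \leq h_k^I(A)$ for all $k$.
\end{proof}
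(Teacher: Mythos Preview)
Your proof is correct and follows essentially the same approach as the paper: observe that $\Delta_I(B)$ is a subcomplex of $\Delta_I(A)$ of the same dimension, both are Cohen--Macaulay since matroid complexes are shellable, and apply Theorem~\ref{thm:mono}. You add the explicit observation that the ``no $(e+1)$-face'' hypothesis is vacuous when $e=d$, which the paper leaves implicit.
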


\begin{proof}
By definition of rank-preserving weak map, $\Delta_I (B)$ is a subcomplex of $\Delta_I (A)$ and they have the same dimension. In addition, independence complexes of matroids are shellable and thus Cohen-Macaulay \cite{B2}. Hence, the result follows from Theorem~\ref{thm:mono}.
\end{proof}

The statement is not true for strong maps (or weak maps that change rank). For example, let $A$ be the rank 2 uniform matroid on 2 elements and $B$ the rank 1 uniform matroid on 2 elements. Then we have a strong map $A \to B$ but $h_1^I(A) = 0$, $h_1^I(B) = 1$.

\section{The Stanley-Reisner Ring} \label{sec:SR}

Fix an infinite field $k$. A simplicial complex $\Delta$ has an associated ring $k[\Delta]$, called the \emph{Stanley-Reisner ring} of the complex: $k[\Delta] = k[x_{v_1}, \dots , x_{v_m}]/I_\Delta$, where $\{v_j\}$ is the set of vertices of the complex, and $I_\Delta$ is the ideal generated by monomials of the form $x_{v_{j_1}} \dotsb x_{v_{j_k}}$, where $\{v_{j_1}, \dots, v_{j_k}\}$ is not a face of the complex. Note that $k[\Delta]$ is graded by degree, which we call the ``coarse'' grading.

Let $P$ be a graded poset of rank $r$ and $\Delta$ its order complex. Then $k[\Delta]$ has an $\mathbb{N}^r$-grading defined as follows: Let $v_1 < \dots < v_r$ be a full chain in $P$ and $d_1$, \dots, $d_r$ nonnegative integers. Then the degree of (the image of) the monomial $x_{v_1}^{d_1} \dotsb x_{v_r}^{d_r}$ in $k[\Delta]$ is defined to be $(d_1,\dots,d_r)$. We call this the ``fine'' grading of $k[\Delta]$. The fine graded component of $k[\Delta]$ corresponding to a tuple $\alpha$ will be denoted $k[\Delta]_\alpha$. If $d_i = 0$ or 1 for all $1 \le i \le r$, then we say $x_{v_1}^{d_1} \dotsb x_{v_r}^{d_r}$ has degree $S$, where $S = \{i \mid d_i = 1\}$, and analogously define $k[\Delta]_S$.

In the case we are interested in, where $\Delta = \Delta(M)$ for a matroid $M$, the polynomial ring is generated by variables indexed by the flats (aside from the empty flat and $E$), and $I_\Delta$ is generated by products of any two variables corresponding to incomparable flats. In particular, $I_\Delta$ includes all monomials except those that are the product of variables from a chain. We will write $k[M]$ for $k[\Delta(M)]$.
Given a chain of flats $C$, write
\[
x_C = \prod_{F \in C} x_F.
\]
An arbitrary element $p \in k[M]_S$ can be expressed as $\sum_{\fl(C) = S} a_C x_C$ with $a_C \in k$.


Let $A$ be a finitely generated graded $k$-algebra. A \emph{system of parameters} for $A$ is a sequence $\theta_1$, \dots, $\theta_r \in A$ of minimal length such that $A / \langle \theta_1,\dots,\theta_r \rangle$ is finite-dimensional over $k$. A system of parameters is \emph{homogeneous} if all of its elements are homogeneous (with respect to the coarse or fine grading, depending on context), and \emph{linear} if all of its elements have coarse degree 1. Assuming $k$ is infinite, there always exists a homogeneous linear system of parameters.

A \emph{regular sequence} in $A$ is a sequence $\theta_1$, \dots, $\theta_r \in R$ such that $\theta_{i}$ is not a zero-divisor in $A / \langle \theta_1,\dots,\theta_{i-1} \rangle$ for all $1 \le i \le r$. We say that $A$ is \emph{Cohen-Macaulay} if every system of parameters of $A$ is a regular sequence. The significance of this definition in combinatorics is the following observation:

\begin{thm}
Let $\Delta$ be a simplicial complex and assume $k[\Delta]$ is Cohen-Macaulay. Let $\theta_1$, \dots, $\theta_r$ be a linear system of parameters and let $R = k[\Delta] / \langle \theta_1,\dots,\theta_r \rangle$, which inherits the coarse grading from $k[\Delta]$. Then for all $i$, the dimension of the degree-$i$ component of $R$ is $h_i(\Delta)$.
\end{thm}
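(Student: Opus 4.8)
The plan is to compare Hilbert series. First I would recall the standard monomial description of a Stanley--Reisner ring: $k[\Delta]$ has a $k$-basis given by the monomials whose support is a face of $\Delta$, and a face $F$ of cardinality $i$ contributes precisely the monomials $\prod_{v \in F} x_v^{d_v}$ with every $d_v \ge 1$. Summing the resulting geometric series face by face gives
\[
\mathrm{Hilb}(k[\Delta], t) = \sum_{F \in \Delta} \frac{t^{|F|}}{(1-t)^{|F|}} = \sum_{i=0}^r f_i(\Delta)\,\frac{t^i}{(1-t)^i},
\]
where we use the convention of Section~\ref{sec:fh} that $f_0 = 1$ counts the empty face and $r = \dim\Delta + 1$. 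Clearing denominators and using the defining relation of the $h$-vector (substitute $x = 1/t$ in $\sum_i h_i x^{r-i} = \sum_i f_i (x-1)^{r-i}$ and multiply through by $t^r$, so that $\sum_i h_i t^i = \sum_i f_i t^i(1-t)^{r-i}$), we obtain
\[
(1-t)^r\,\mathrm{Hilb}(k[\Delta],t) = \sum_{i=0}^r f_i\, t^i (1-t)^{r-i} = \sum_{i=0}^r h_i(\Delta)\, t^i .
\]

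Next I would note that the Krull dimension of $k[\Delta]$ equals $\dim\Delta + 1 = r$, so a system of parameters for $k[\Delta]$ has length exactly $r$, matching the hypothesis. Since $k[\Delta]$ is assumed Cohen--Macaulay, the linear system of parameters $\theta_1,\dots,\theta_r$ is a regular sequence. The key step is then the routine induction: if $S$ is a graded $k$-algebra and $\theta \in S_1$ is a nonzerodivisor, the short exact sequence of graded modules
\[
0 \longrightarrow S(-1) \xrightarrow{\ \cdot\theta\ } S \longrightarrow S/\theta S \longrightarrow 0
\]
gives $\mathrm{Hilb}(S/\theta S, t) = (1-t)\,\mathrm{Hilb}(S,t)$. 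Applying this successively to $k[\Delta]$, then to $k[\Delta]/\langle\theta_1\rangle$, and so on — legitimate because each $\theta_{j+1}$ is a nonzerodivisor on the previous quotient, by the regular-sequence property — yields
\[
\mathrm{Hilb}(R, t) = (1-t)^r\,\mathrm{Hilb}(k[\Delta],t) = \sum_{i=0}^r h_i(\Delta)\, t^i .
\]
Comparing coefficients of $t^i$ shows that $\dim_k R_i = h_i(\Delta)$ for all $i$, which is the assertion.

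The only points requiring care are the two standard background inputs: the Hilbert series formula for $k[\Delta]$ (a direct monomial count, but one should keep track of the cardinality-based indexing with $f_0 = 1$ for the empty face), and the identification of the Krull dimension of $k[\Delta]$ with $r$, which ensures that the linear system of parameters has exactly the right length so that precisely $r$ factors of $(1-t)$ are removed and no spurious factor survives. Granting the Cohen--Macaulay hypothesis, the regular-sequence argument via the short exact sequences above is entirely formal. This is, of course, Stanley's classical theorem; I would cite it and include the brief argument for completeness.
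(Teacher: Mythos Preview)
Your argument is correct and is precisely the classical proof: compute the Hilbert series of $k[\Delta]$ by the face-by-face monomial count, identify $(1-t)^r\,\mathrm{Hilb}(k[\Delta],t)$ with the $h$-polynomial via the defining relation, and then use that a degree-one regular element multiplies the Hilbert series by $(1-t)$, iterated $r$ times along the regular sequence supplied by the Cohen--Macaulay hypothesis.

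There is nothing to compare against, however: the paper states this theorem as a known background result and gives no proof of it (the surrounding citations to Stanley are for the related balanced/flag refinement and the monotonicity theorem). Your write-up is exactly the standard argument one would supply ``for completeness,'' and the care you take with the cardinality-based indexing ($f_0=1$, $r=\dim\Delta+1$) matches the paper's conventions.
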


Now assume $\Delta$ is the order complex of a graded poset $P$ of rank $r$. In this case, there is a particularly nice linear system of parameters for $k[\Delta]$, given by
\[
\theta_i = \sum_{v \in P,\, \rk v = i} x_v
\]
for $i = 1$, \dots, $r$. Note that this system of parameters is homogeneous with respect to the fine grading.

\begin{theorem}[\cite{S0}] \label{thm:CMflag}
Let $\Delta$ be the order complex of a graded poset $P$ and assume $k[\Delta]$ is Cohen-Macaulay. Let $(\theta_i)$ be as above, and let $R = k[\Delta] / \langle \theta_1,\dots,\theta_r \rangle$, which inherits the fine grading from $k[\Delta]$. The following are true. 
\begin{enumerate}
\item The dimension of the degree-$S$ component of $R$ is $h_S (P)$.
\item If $\alpha = (d_1,\dots,d_r)$ and $d_i > 1$ for any $i$, then $(R_M)_\alpha = 0$.
\end{enumerate}
\end{theorem}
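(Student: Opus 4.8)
The plan is to compute the full $\mathbb{Z}^r$-graded (fine) Hilbert series of $R$ and read both statements off of it. The one substantive input, which I would recall first, is that $\theta_1,\dots,\theta_r$ is a linear system of parameters for $k[\Delta]$ and that each $\theta_i$ is homogeneous of fine degree $e_i$, the $i$-th standard basis vector of $\mathbb{Z}^r$. This is the standard fact for balanced complexes: $\Delta(P)$ is balanced, properly colored by rank, and $\operatorname{Spec} k[\Delta]$ is the union of the coordinate subspaces indexed by the facets of $\Delta$; each facet is a maximal chain of $P$, which meets every rank exactly once (here one uses that $\Delta$ is pure, automatic since all maximal chains of a graded poset have length $r$), so on such a subspace $\theta_i$ restricts to the coordinate function of the rank-$i$ vertex, and the $\theta_i$ therefore have no common zero except the origin. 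Since $\langle\theta_1,\dots,\theta_r\rangle$ is generated by fine-homogeneous elements, $R$ inherits the fine grading; and since $k[\Delta]$ is Cohen--Macaulay, $\theta_1,\dots,\theta_r$ is a regular sequence.

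Next I would compute the fine Hilbert series of $k[\Delta]$ itself. The monomials $\prod_{v\in C}x_v^{d_v}$ with $C$ a chain of $P$ and the $d_v$ positive integers form a $k$-basis of $k[\Delta]$, so summing over $C$ and grouping by $\fl(C)$ gives
\[
H_{k[\Delta]}(t_1,\dots,t_r)=\sum_{\text{chains }C}\ \prod_{v\in C}\frac{t_{\rk(v)}}{1-t_{\rk(v)}}=\sum_{S\subseteq[r]}f_S(P)\prod_{i\in S}\frac{t_i}{1-t_i}.
\]
Because $\theta_1,\dots,\theta_r$ is a fine-homogeneous regular sequence with $\deg\theta_i=e_i$, killing them one at a time --- using at each stage the graded short exact sequence $0\to A(-e_i)\xrightarrow{\cdot\theta_i}A\to A/\langle\theta_i\rangle\to 0$ for the relevant intermediate quotient $A$, which is exact precisely because $\theta_i$ is a non-zero-divisor there --- multiplies the fine Hilbert series by $\prod_{i=1}^r(1-t_i)$. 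Hence
\[
H_R(t_1,\dots,t_r)=\prod_{i=1}^r(1-t_i)\cdot\sum_{S\subseteq[r]}f_S(P)\prod_{i\in S}\frac{t_i}{1-t_i}=\sum_{S\subseteq[r]}f_S(P)\ \prod_{i\in S}t_i\ \prod_{i\notin S}(1-t_i).
\]

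Both parts then follow by inspection of this expression. Every monomial appearing on the right has each variable to power at most $1$, so $\dim_k R_\alpha=0$ as soon as some coordinate of $\alpha$ exceeds $1$, which is part (2). For part (1), the coefficient of $\prod_{i\in T}t_i$ in the final sum is $\sum_{S\subseteq T}(-1)^{|T|-|S|}f_S(P)$, which equals $h_T(P)$ by the definition of the flag $h$-vector. (As a sanity check, setting $t_1=\cdots=t_r=t$ recovers the coarse statement $\dim_k R_i=h_i(\Delta)$.)

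The main obstacle is really just the first paragraph: verifying that $\theta_1,\dots,\theta_r$ is a system of parameters and invoking Cohen--Macaulayness to upgrade it to a regular sequence --- this is where the balanced structure of an order complex is essential, and everything afterward is formal manipulation of multigraded Hilbert series. A more combinatorial alternative would be to argue directly that $R_S$ is spanned by the classes of $\{x_C:\fl(C)=S\}$ modulo the relations $\sum_v x_{C'\cup\{v\}}=0$ (the sum over rank-$j$ vertices $v$ completing a fixed chain $C'$ of flag $S\setminus\{j\}$, one such relation for each $j\in S$) and then identify the resulting dimension with $h_S(P)$; but for a general graded poset that last identification is less transparent than the Hilbert-series route.
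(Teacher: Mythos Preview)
The paper does not prove this theorem; it is quoted with a citation to Stanley~\cite{S0} and then used as a black box. Your argument is correct and is essentially the standard proof from Stanley's balanced Cohen--Macaulay paper: identify the fine Hilbert series of $k[\Delta]$ from the monomial basis, use Cohen--Macaulayness to make $\theta_1,\dots,\theta_r$ a fine-homogeneous regular sequence, and read off both claims from $H_R(t)=\prod_i(1-t_i)\,H_{k[\Delta]}(t)$. One small remark: your parenthetical that purity is ``automatic since all maximal chains of a graded poset have length $r$'' depends on one's convention for ``graded''; if you want to be safe, you can instead note that purity of $\Delta$ already follows from the hypothesis that $k[\Delta]$ is Cohen--Macaulay.
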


We now focus on the case where $\Delta = \Delta(M)$ for a matroid $M$ of rank $r+1$.
Set $\theta_i = \sum_{F \in \FF, \, \rk F = i} \ x_F$ as above, and let $\Theta_M$ to be the ideal generated by the $\theta_i$ over $i \in [r]$. (We may drop the subscript $M$ if it is clear.) Define $R_M = k[M]/\Theta_M$. By the above theorem, $\dim (R_M)_S = h_S(M)$.

We make one more definition before moving on.

\begin{definition}
Let $M$ be a matroid.
\begin{enumerate}
\item The \emph{lexicographic order} on rank $k$ flats in $\FF(M)$ is defined as follows: given two flats $F \neq G$, let $j$ be the first element of the ground set $[n]$ contained in one of $F$, $G$ but not the other; if $j \in F$ but $j \notin G$, we say that $F < G$.

\item The \emph{lexicographic order} on flag $S$ chains in $\Delta(M)$ is defined as follows: given two chains $C = \{F_i\}$, $C' = \{G_i\}$, let $k$ be the lowest rank such that $F_k \neq G_k$; if $F_k < G_k$, we say that $C < C'$.
\end{enumerate}
\end{definition}

Note that this order is consistent with the notion of minimality in Section~\ref{sec:fh}: if two flats of the same rank $G$, $G'$ are both contained in the interval $[F, H]$, and $G$ is minimal with respect to that interval, then $G \leq G'$.

\section{Matroid maps and ring maps}

Let $\Delta$, $\Delta'$ be simplicial complexes, and let $\phi : \Delta \to \Delta'$ be a map of complexes (that is, $f(\sigma) \le f(\tau)$ for all $\sigma$, $\tau \in \Delta$ such that $\sigma \le \tau$). Then we have a map $\psi : k[\Delta'] \to k[\Delta]$ defined by
\begin{equation} \label{eq:map}
\psi(x_\sigma) = \sum_{\tau \in \phi^{-1}(\sigma)} x_\tau
\end{equation}
for all $\sigma \in \Delta$. (Here, $x_\sigma = \prod_{v \in \sigma} x_v$.) It is straightforward to check that this gives a well-defined homomorphism $k[\Delta'] \to k[\Delta]$. Moreover, we have the following.

\begin{proposition}
$\phi$ is surjective if and only if $\psi$ is injective.
\end{proposition}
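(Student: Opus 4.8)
The plan is to prove both implications directly from the monomial bases of the two Stanley--Reisner rings. Recall that a monomial of $k[\Delta']$ is a product $\prod_v x_v^{a_v}$ whose support $\{v : a_v > 0\}$ is a face of $\Delta'$, and that $\psi$, being a ring homomorphism, sends it to $\prod_v \psi(x_v)^{a_v}$, a $k$-linear combination of monomials of $k[\Delta]$. Throughout I use that $\phi$ is a simplicial map, so $\phi(\tau \cup \tau') = \phi(\tau) \cup \phi(\tau')$ and hence that the image of $\phi$ is a subcomplex of $\Delta'$.

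To show ``$\psi$ injective $\implies$ $\phi$ surjective'' I would argue the contrapositive. If $\phi$ is not surjective, choose any face $\sigma_0 \in \Delta'$ outside the image of $\phi$. Then $x_{\sigma_0} \ne 0$ in $k[\Delta']$, while I claim $\psi(x_{\sigma_0}) = \prod_{v \in \sigma_0}\psi(x_v) = 0$. Indeed, after expanding, each monomial occurring is of the form $\prod_{v \in \sigma_0} x_{\tau_v}$ with $\phi(\tau_v) = \{v\}$, and it is nonzero only if $\rho := \bigcup_{v \in \sigma_0}\tau_v$ is a face of $\Delta$; but then $\phi(\rho)$ is a face of $\Delta'$ containing $\sigma_0$, which (the image of $\phi$ being a subcomplex) forces $\sigma_0 \in \operatorname{im}(\phi)$, a contradiction. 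So $\psi$ has a nonzero kernel.

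For the converse, ``$\phi$ surjective $\implies$ $\psi$ injective'', the key device is to fix, for each face $\sigma$ of $\Delta'$, a preimage face $\rho_\sigma \in \Delta$ with $\phi(\rho_\sigma) = \sigma$ of minimal cardinality. Minimality forces $\phi$ to restrict to a bijection $\rho_\sigma \xrightarrow{\ \sim\ } \sigma$ (if $\phi$ collapsed two vertices of $\rho_\sigma$, or sent one outside $\sigma$, we could shrink $\rho_\sigma$). Given a monomial $m = \prod_{v \in \sigma}x_v^{a_v}$ of $k[\Delta']$ with support $\sigma$, transport it along this bijection to $\bar m := \prod_{v \in \sigma}x_{\bar v}^{a_v} \in k[\Delta]$, where $\bar v \in \rho_\sigma$ is the $\phi$-preimage of $v$; this is nonzero since its support is the face $\rho_\sigma$. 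I would then check: (i) $\bar m$ occurs in $\psi(m)$ with coefficient $1$, since the only way to get a monomial of $\psi(m)$ supported inside $\rho_\sigma$ is to pick the term $x_{\bar v}$ from each factor $\psi(x_v)$ --- this uses injectivity of $\phi$ on $\rho_\sigma$; (ii) $\bar m$ does not occur in $\psi(m')$ for any other monomial $m'$: if $\operatorname{supp}(m') \ne \sigma$, apply $\phi$ to supports (every monomial of $\psi(m')$ is supported on a face mapping onto $\operatorname{supp}(m')$, whereas $\phi(\rho_\sigma) = \sigma$); if $\operatorname{supp}(m') = \sigma$, then by (i) the only monomial of $\psi(m')$ with support $\rho_\sigma$ is $\bar{m'} \ne \bar m$; (iii) $m \mapsto \bar m$ is injective, since $\bar m$ determines $\rho_\sigma = \operatorname{supp}(\bar m)$, hence $\sigma = \phi(\rho_\sigma)$, hence the exponents. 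Granting (i)--(iii), for a nonzero $p = \sum_i c_i m_i$ with the $m_i$ distinct monomials and $c_i \ne 0$, the coefficient of $\bar m_1$ in $\psi(p)$ is exactly $c_1 \ne 0$, so $\psi(p) \ne 0$.

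The main obstacle is this second direction --- concretely, the cancellation bookkeeping in (i) and (ii). The tempting shortcut is to choose one preimage vertex $\bar v$ for each vertex $v$ of $\Delta'$ and substitute $x_v \mapsto \sum x_w$, but this does not obviously work because the chosen $\bar v$'s need not span a face of $\Delta$, i.e. there may be no section of $\phi$ defined on vertices. One is therefore forced to pick preimages face by face, and the structural fact that makes the ``leading monomial'' $\bar m$ simultaneously well defined and immune to cancellation is exactly that a cardinality-minimal preimage of a face $\sigma$ maps bijectively onto $\sigma$.
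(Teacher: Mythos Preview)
Your proof is correct. The paper's own proof is two sentences: it asserts that, from the formula $\psi(x_\sigma)=\sum_{\tau\in\phi^{-1}(\sigma)}x_\tau$, it is ``clear'' that $\psi$ is injective iff $\psi(p)\neq 0$ for every monomial $p$, and that this is ``easily seen'' to be equivalent to surjectivity of $\phi$. Your first direction is essentially the same observation (if $\sigma_0$ is not in the image then $\psi(x_{\sigma_0})=0$); you recover it by expanding the product of the $\psi(x_v)$ rather than by reading it off the closed formula, which is slightly longer but fine.

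Where your write-up adds genuine content is the converse. In the dimension-preserving situation that the paper actually uses later (the map $\Delta(A')\to\Delta(B)$), every $\tau\in\phi^{-1}(\sigma)$ already bijects onto $\sigma$, and one checks that $\psi(m)=\sum_{\tau\in\phi^{-1}(\sigma)}\bar m_\tau$ for \emph{every} monomial $m$ supported on $\sigma$; distinct monomials then have images with pairwise disjoint monomial supports, so ``$\psi(p)\neq 0$ for all $p$ $\Rightarrow$ $\psi$ injective'' really is immediate. In the general face-collapsing setting of the proposition, however, the expansions $\psi(m)$ and $\psi(m')$ can share monomials (for instance $x_ax_b$ occurs in both $\psi(x_v)$ and $\psi(x_v^2)$ when $\phi$ collapses an edge $\{a,b\}$ to a vertex $v$), so the disjoint-support shortcut is unavailable. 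Your device of choosing a \emph{minimal} preimage $\rho_\sigma$, on which $\phi$ is forced to be a bijection, and then reading off the coefficient of the transported monomial $\bar m$, is exactly what is needed to isolate a leading term immune to cancellation. Your closing remark that a global vertex-wise section need not exist is also on point. So your argument is a correct, more robust elaboration of what the paper leaves implicit.
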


\begin{proof}
It is clear from the definition that $\psi$ is injective if and only if $\psi(p) \neq 0$ for all monomials $p \in k[\Delta']$. This is easily seen to be equivalent to $\phi$ being surjective.
\end{proof}

Given a rank-preserving weak map of matroids $A \to B$, Prop.~\ref{phisurj} says we have a surjective map $\phi_B : \Delta(A) \to \Delta(B)$, and thus we have an injective map $\psi_B : k[B] \to k[A]$. However, this map does not preserve the fine grading of $k[A]$ and $k[B]$, as $\phi_B$ may decrease the rank of some flats. To rectify this, we introduce a new ring $k[A']$.

Given two matroids $A$, $B$ on the same ground set $E = [n]$, we define the \emph{auxiliary pseudo-matroid} $A'$ to be the ground set $E$, together with the set $\FF(A')$ of all flats $F \in \FF(A)$ such that $\rk_B(\phi_B (F)) = \rk_A(F)$. Equivalently, a flat of $A$ is in $\FF(A')$ if and only if it has a basis which is independent in $B$. We call $\FF(A')$ the flats of $A'$, although $A'$ is not necessarily a matroid.

\begin{proposition}
If $A \rightarrow B$ is a rank-preserving weak map, then $\FF(A')$ is graded by $\rk_A.$
\end{proposition}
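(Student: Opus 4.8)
\section*{Proof proposal for Proposition (``$\FF(A')$ is graded by $\rk_A$'')}

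The plan is to verify directly that $\FF(A')$, viewed as the induced subposet of $\FF(A)$ ordered by inclusion, has a minimum and a maximum and that every covering relation $F \lessdot G$ in it satisfies $\rk_A(G) = \rk_A(F)+1$. Since $\rk_A$ is strictly order-preserving and $\rk_A(0_A)=0$, these facts together exhibit $\rk_A$ as the rank function of $\FF(A')$, which is exactly the assertion.

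First I would record the endpoints. We have $\phi_B(E)=E$ and $\rk_B(E)=\rk(B)=\rk(A)=\rk_A(E)$ because the weak map is rank-preserving, so $E\in\FF(A')$. Also $\rk_B(0_A)\le \rk_A(0_A)=0$ by Lemma~\ref{phidecr}, hence $\rk_B(\phi_B(0_A))=\rk_B(0_A)=0=\rk_A(0_A)$, so $0_A\in\FF(A')$.

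Now suppose, toward a contradiction, that $F,G\in\FF(A')$ with $F\subsetneq G$ and $\rk_A(G)\ge \rk_A(F)+2$; I will exhibit a flat $H\in\FF(A')$ strictly between them. The crux is the claim that there exists $a\in G\setminus F$ with $a\notin\phi_B(F)$. Indeed, if instead $G\setminus F\subseteq\phi_B(F)$, then (as $F\subseteq\phi_B(F)$) we get $G\subseteq\phi_B(F)$, so $\phi_B(G)\subseteq\phi_B(\phi_B(F))=\phi_B(F)$; together with $\phi_B(F)\subseteq\phi_B(G)$ this forces $\phi_B(F)=\phi_B(G)$, whence $\rk_A(F)=\rk_B(\phi_B(F))=\rk_B(\phi_B(G))=\rk_A(G)$, contradicting $\rk_A(G)>\rk_A(F)$. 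Given such an $a$, set $H=\phi_A(F\cup\{a\})$; then $F\subsetneq H$, $H\subseteq\phi_A(G)=G$, and $\rk_A(H)=\rk_A(F)+1<\rk_A(G)$, so $H$ lies strictly between $F$ and $G$. To see $H\in\FF(A')$, choose an $A$-basis $I$ of $F$ that is independent in $B$ (possible since $F\in\FF(A')$). Then $I\cup\{a\}$ is an $A$-basis of $H$; moreover $|I|=\rk_A(F)=\rk_B(\phi_B(F))$ and $\phi_B(I)\subseteq\phi_B(F)$, so $\phi_B(I)=\phi_B(F)$, and since $a\notin\phi_B(F)$ the set $I\cup\{a\}$ is independent in $B$. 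Hence $H$ is a flat of $A'$ strictly between $F$ and $G$, contradicting $F\lessdot G$. This completes the argument.

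The only step with genuine content is the displayed claim, and it is precisely the place where rank-preservation of the weak map is used: without it, $\phi_B(F)=\phi_B(G)$ is perfectly compatible with $\rk_A(F)<\rk_A(G)$, and the interval $[F,G]$ could indeed ``skip'' ranks inside $\FF(A')$. I expect the main (small) obstacle to be recognizing that this claim — rather than, say, trying to show that all the interpolation flats of Section~\ref{sec:fh} lie in $\FF(A')$ — is the right statement to prove; everything else is routine manipulation of closures and ranks.
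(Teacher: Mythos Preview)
Your proof is correct and follows essentially the same approach as the paper's: both show that between any $F\subsetneq G$ in $\FF(A')$ with $\rk_A(G)\ge\rk_A(F)+2$ one can find $a\in G\setminus\phi_B(F)$ and take $H=\phi_A(F\cup\{a\})$ as an intermediate flat of $A'$. The only cosmetic differences are that the paper obtains $a$ directly from $\phi_B(F)\subsetneq\phi_B(G)$ (rather than by contradiction) and verifies $H\in\FF(A')$ by a rank argument using Lemma~\ref{phidecr} (rather than by exhibiting a $B$-independent basis of $H$); it also does not separately record that $0_A,E\in\FF(A')$.
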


\begin{proof}
What we need to show is if $F$, $F' \in \FF(A')$ and $\rk_A(F') > \rk_A(F) +1$, then there exists $G \in \FF(A')$ with $F \varsubsetneq G \varsubsetneq F'$. Now $\phi_B (F) \varsubsetneq \phi_B (F')$ since both flats maintain their ranks under $\phi_B$, and this in turn implies $F' \nsubseteq \phi_B (F)$. Let $x \in F' \setminus \phi_B (F)$, and $G = \phi_A (F \cup \{x\})$. Then $\rk_A(G) = \rk_A (F) + 1$, so $F \varsubsetneq G \varsubsetneq F'$. By Lemma~\ref{phidecr}, $\rk_B (\phi_B (G))$ is either $\rk_B (\phi_B (F))$ or $\rk_B (\phi_B (F)) +1$. It cannot be the former, since then we would have $\phi_B (G) = \phi_B (F)$, but $\phi_B (G) \ni x \notin \phi_B (F)$. Thus $G \in \FF(A').$
\end{proof}

We define $\Delta(A')$ to be the order complex of $\FF(A') \setminus \{0_A,E\}$
and let $k[A']$ be the Stanley-Reisner ring of $\Delta(A')$. Since $A'$ is graded, $k[A']$ is fine-graded in the sense of Section~\ref{sec:SR}. Note that $k[A']$ is not necessarily Cohen-Macaulay.  Since the restriction of the closure map $\phi_B : \FF(A') \rightarrow \FF(B)$ preserves containment, the induced map on chains $\Delta(A') \rightarrow \Delta(B)$ also preserves flag. Thus we have a homomorphism $\psi^A _B : k[B] \to k[A']$ as in \eqref{eq:map}. (This map will usually just be written $\psi$.) This map preserves the fine grading of $k[A']$ and $k[B]$, and by Prop.~\ref{phisurj}(3) it is injective.

Analogously to matroids, define $\theta_i \in k[A']$ as $\sum x_F$ taken over all $F \in \FF(A')$ with $\rk(F) = i$, and let $\Theta_{A'}$ be the ideal generated by the $\theta_i$. Define $R_{A'} = k[A']/\Theta_{A'}$, which inherits the fine grading of $k[A']$. Since $\psi^{A} _B (\Theta_B) \subseteq \Theta_{A'}$, $\psi$ induces a well defined map $\bar{\psi}$ from $R_B$ to $R_{A'}$. The following result shows why we can work with $A'$ instead of $A$.

\begin{proposition} \label{a'reduce}
If $A \rightarrow B$ is a rank-preserving weak map and $A'$ its auxiliary pseudo-matroid, then $\dim (R_A)_S \geq \dim (R_{A'})_S$ for all $S$. In particular, $R_{A'}$ is finite-dimensional over $k$.
\end{proposition}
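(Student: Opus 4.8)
The plan is to construct an explicit fine-grading-preserving surjection of rings $R_A \twoheadrightarrow R_{A'}$; the componentwise inequality $\dim(R_A)_S \geq \dim(R_{A'})_S$ then drops out immediately, as does the finite-dimensionality of $R_{A'}$, since $R_A$ is finite-dimensional over $k$ (the $\theta_i$, $i \in [r]$, being a linear system of parameters for $k[A]$) and $R_{A'}$ is a quotient of it.

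Concretely, I would first note that $\FF(A') \setminus \{0_A, E\}$ is an \emph{induced} subposet of $\FF(A) \setminus \{0_A, E\}$ (the order is set inclusion in both), so its vertex set is a subset of the variable set of $k[A]$, and by the preceding proposition $A'$ is graded by $\rk_A$, i.e. $\rk_{A'}$ agrees with $\rk_A$ on $\FF(A')$. Define $\pi : k[A] \to k[A']$ by $\pi(x_F) = x_F$ if $F \in \FF(A')$ and $\pi(x_F) = 0$ otherwise. To check $\pi$ is well defined I would verify that the corresponding map on polynomial rings kills $I_{\Delta(A)}$: a squarefree generator $x_{F_1} \dotsb x_{F_j}$ corresponds to a set $\{F_1, \dots, F_j\}$ that is not a chain in $\FF(A)$; if some $F_i \notin \FF(A')$ the image is $0$, and otherwise $\{F_1, \dots, F_j\}$ is still not a chain in the induced subposet $\FF(A')$, so its image lies in $I_{\Delta(A')}$. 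Thus $\pi$ descends to $k[A]$; it is visibly surjective, and since $\rk_{A'} = \rk_A$ on $\FF(A')$ it preserves the fine grading, with $x_C \mapsto x_C$ when the chain $C$ lies in $\FF(A')$ and $x_C \mapsto 0$ otherwise.

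Next I would observe that $\pi(\theta_i^A) = \theta_i^{A'}$ for each $i \in [r]$, because the rank-$i$ flats of $A$ not lying in $\FF(A')$ are sent to $0$, leaving precisely the rank-$i$ flats of $A'$. By surjectivity of $\pi$ this gives $\pi(\Theta_A) = \Theta_{A'}$, so $\pi$ induces a surjective, fine-grading-preserving homomorphism $\bar\pi : R_A \to R_{A'}$. Restricting to the degree-$S$ component yields a surjection $(R_A)_S \twoheadrightarrow (R_{A'})_S$, hence $\dim(R_A)_S \geq \dim(R_{A'})_S$ for all $S$; and $R_{A'}$, as a quotient of the finite-dimensional $R_A$, is finite-dimensional over $k$.

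There is no serious obstacle in this step: the only points requiring care are the well-definedness of $\pi$ on the Stanley--Reisner quotient and the identity $\pi(\Theta_A) = \Theta_{A'}$, both of which rest entirely on the already-established fact that $\FF(A')$ is graded by $\rk_A$ and hence sits inside $\FF(A)$ as an induced subposet. The real work of the paper lies in the later analysis of $R_{A'}$, not in this reduction.
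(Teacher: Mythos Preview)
Your argument is correct and is essentially the same as the paper's: the paper simply states that $k[A'] = k[A]/J$ where $J$ is generated by the $x_F$ with $\rk(\phi_B(F)) \neq \rk(F)$, and observes that the induced map $R_A \to R_{A'}$ is fine-degree-preserving and surjective. You have written out explicitly the quotient map the paper alludes to and verified its well-definedness and compatibility with the $\theta_i$; nothing further is needed.
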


\begin{proof}
$k[A']$ is equal to $k[A]/J$, where $J$ is the ideal generated by all $x_F$ such that $\rk(\phi_B (F)) \neq \rk(F)$. The induced map $R_A \rightarrow R_{A'}$ is fine degree-preserving and surjective, since the composition $k[A] \to k[A'] \to R_{A'}$ is surjective.
\end{proof}

\begin{cor} \label{cor:reduction}
Let $A \to B$ be a rank-preserving weak map such that $\bar{\psi}: R_B \rightarrow R_{A'}$ is injective. Then $h_S (A) \geq h_S (B)$ for all $S$.
\end{cor}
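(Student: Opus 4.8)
The plan is to prove the inequality by a short dimension count along the fine grading: combine the injectivity of $\bar{\psi} : R_B \to R_{A'}$ (the hypothesis) with the fine-degree-preserving surjection $R_A \to R_{A'}$ furnished by Proposition~\ref{a'reduce}, and then translate dimensions of fine-graded pieces into flag $h$-numbers using Theorem~\ref{thm:CMflag}. So the whole argument is essentially bookkeeping once injectivity of $\bar{\psi}$ is granted.

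First I would record that $\bar{\psi}$ respects the fine grading. It is induced by $\psi^A_B : k[B] \to k[A']$, which is fine-degree-preserving because $\phi_B : \Delta(A') \to \Delta(B)$ preserves flags, and it descends to a map $R_B = k[B]/\Theta_B \to R_{A'} = k[A']/\Theta_{A'}$ since $\psi^A_B(\Theta_B) \subseteq \Theta_{A'}$. Hence, for each $S \subseteq [r]$, $\bar{\psi}$ restricts to a linear map $(R_B)_S \to (R_{A'})_S$, and the injectivity hypothesis yields $\dim (R_{A'})_S \geq \dim (R_B)_S$.

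Next I would invoke Proposition~\ref{a'reduce}, which provides a fine-degree-preserving surjection $R_A \to R_{A'}$ and therefore $\dim (R_A)_S \geq \dim (R_{A'})_S$ for all $S$. Chaining the two inequalities gives $\dim (R_A)_S \geq \dim (R_B)_S$ for every $S \subseteq [r]$. Finally I would convert dimensions into $h$-numbers: the lattices of flats of $A$ and $B$ are geometric lattices, whose order complexes are shellable, so $k[A]$ and $k[B]$ are Cohen-Macaulay; since $R_A$ and $R_B$ are by definition the quotients of $k[A]$, $k[B]$ by the canonical linear systems of parameters $(\theta_i)$ appearing in Theorem~\ref{thm:CMflag}, that theorem gives $\dim (R_A)_S = h_S(A)$ and $\dim (R_B)_S = h_S(B)$. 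Combined with the inequality above, this is exactly $h_S(A) \geq h_S(B)$ for all $S$.

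There is no genuine obstacle inside this corollary: each step is immediate from results already in hand. The entire difficulty is pushed into the hypothesis, namely that the injective map $\psi : k[B] \to k[A']$ remains injective after quotienting by the linear systems of parameters $\Theta_B$ and $\Theta_{A'}$. Establishing that is the real work to be carried out in the remainder of the paper, and as the introduction indicates, it is expected to be done indirectly, by dualizing and proving that the dual map of vector spaces is surjective.
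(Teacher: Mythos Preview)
Your proposal is correct and follows essentially the same route as the paper: both chain the dimension inequalities $h_S(A)=\dim(R_A)_S\ge\dim(R_{A'})_S\ge\dim(R_B)_S=h_S(B)$ using Theorem~\ref{thm:CMflag}, Proposition~\ref{a'reduce}, and the graded injectivity of $\bar\psi$. Your write-up is somewhat more explicit about why $\bar\psi$ preserves the fine grading, but the argument is the same.
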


\begin{proof}
The hypothesis is equivalent to the statement that the restriction of $\bar{\psi}$ to degree $S$ is injective for all $S$. Then by Thm.~\ref{thm:CMflag} and Prop.~\ref{a'reduce},
\[
h_S (A) = \dim_k ((R_A)_S) \geq \dim_k ((R_{A'})_S) \geq \dim_k ((R_B)_S) = h_S (B).
\]
\end{proof}


Thus, we have reduced the statement that $h_S (A) \geq h_S (B)$ for a rank-preserving weak map $A \rightarrow B$ to the following claim: Let $A \rightarrow B$ be a rank-preserving weak map of matroids. Then the map $\bar{\psi}: R_B \rightarrow R_{A'}$ is injective.

\section{Proof of the Main Theorem}

In this section we prove our main result, Thm.~\ref{mainthm}. As stated earlier, we prove that if $A \to B$ is a rank-preserving weak map, then $\bar{\psi}$ is injective. We do this by showing the induced map of the dual vector spaces is surjective, by finding preimages for each element of a basis.

Let $M$ be a matroid of rank $r+1$. Let $k[M]^\ast$ denote the dual vector space to $k[M]$, and let $\Phi_M \subseteq k[M]^\ast$ be the annihilator of $\Theta_M \subseteq k[M]$. We have $\Phi_M = \bigoplus_{S \subseteq [r]} (\Phi_M)_S$, where $(\Phi_M)_S$ can be identified as the space of linear functionals on $k[M]_S$ which annihilate $\Theta_S$. 


Given a chain $C \in \Delta(M)$, let $\epsilon_C \in k[M]^\ast$ be the functional satisfying $\epsilon_C (x_C) = 1$ and $\epsilon_C (x_D) = 0$ for all $D \neq C$. Thus an arbitrary element of $k[M]_S^*$ can be written as $\sum_{\fl(C) = S} b_C \epsilon_C$ where $b_C \in k$ for all $C$.

\begin{proposition} \label{phicond}
A functional $f = \sum_{\fl(C) = S} b_C \epsilon_C$ lies in $(\Phi_M)_S$ if and only if for all $i \in S$ and all chains $C$ with $\fl(C) = S\setminus i$, we have $\sum_{D \supseteq C} b_D = 0$.
\end{proposition}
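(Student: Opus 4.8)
The plan is to make the space $(\Phi_M)_S$ concrete by first writing down an explicit spanning set for $\Theta_S := (\Theta_M)_S$, the degree-$S$ fine-graded component of the ideal $\Theta_M$, and then evaluating the pairing of $f$ against that spanning set. Recall that $(\Phi_M)_S$ is exactly the set of $f \in k[M]_S^*$ annihilating $\Theta_S$, so once a spanning set of $\Theta_S$ is known, membership in $(\Phi_M)_S$ becomes a finite list of linear conditions on the coefficients $b_C$.

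For the spanning set: each $\theta_i = \sum_{\rk F = i} x_F$ is homogeneous of fine degree $\{i\}$, so $\Theta_M = \langle \theta_1,\dots,\theta_r\rangle$ is a fine-homogeneous ideal and $\Theta_S$ is spanned by the elements $m\theta_i$ of fine degree $S$, where $m$ ranges over monomials of $k[M]$ and $i \in [r]$. I would note that every nonzero term $m x_F$ of $m\theta_i$ has fine degree $\deg(m) + e_i$, so $m\theta_i$ has degree $S$ only if $\deg(m) + e_i = \mathbf{1}_S$; since $S$ is a $0/1$ vector this forces $i \in S$ and $\deg(m) = \mathbf{1}_{S\setminus i}$, and in particular $m$ is squarefree, so $m = x_C$ for a chain $C$ with $\fl(C) = S\setminus i$. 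Hence
\[
\Theta_S = \spane_k \{\, x_C\theta_i \ : \ i \in S,\ \fl(C) = S\setminus i \,\}.
\]

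Finally I would compute each generator inside $k[M]$: since $x_C x_F = 0$ unless $C\cup\{F\}$ is a chain, we get $x_C\theta_i = \sum_F x_{C\cup\{F\}}$ summed over rank-$i$ flats $F$ with $C\cup\{F\}$ a chain; as $\rk F = i \notin \fl(C)$, these $F$ are precisely the rank-$i$ flats of the chains of flag $S$ containing $C$, so $x_C\theta_i = \sum_{D \supseteq C,\, \fl(D) = S} x_D$. For $f = \sum_{\fl(C)=S} b_C\epsilon_C$ we then get $f(x_C\theta_i) = \sum_{D\supseteq C} b_D$. Since $f \in (\Phi_M)_S$ iff $f$ kills every generator $x_C\theta_i$ above, this is exactly the condition that $\sum_{D\supseteq C} b_D = 0$ for all $i \in S$ and all $C$ with $\fl(C) = S\setminus i$, which is the claim. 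The proof is really just this unwinding of definitions; the only step needing genuine care — and thus the main obstacle — is the degree bookkeeping establishing the displayed description of $\Theta_S$, in particular ruling out contributions from monomials with a repeated variable.
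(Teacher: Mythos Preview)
Your proof is correct and follows exactly the same approach as the paper's own argument: identify $\Theta_S$ as the span of the elements $x_C\theta_i$ with $i\in S$ and $\fl(C)=S\setminus i$, and then observe that $f$ annihilates each such element precisely when $\sum_{D\supseteq C} b_D=0$. The paper states this in two sentences without justification; your version supplies the degree bookkeeping that the paper leaves implicit.
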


\begin{proof}
The latter condition is satisfied if and only if $f$ annihilates all elements of the form $\theta_{i} x_C$ with $\fl(C)=S \setminus i$. Since these elements generate $\Theta_S$, the result follows.
\end{proof}

Now let $A \to B$ be a rank-preserving weak map, where $\rk(A) = \rk(B) = r+1$. Let $\pi : k[A']^\ast \to k[B]^\ast$ be the map dual to $\psi$ (i.e., it is defined by pre-composition with $\psi$). For each $S \subseteq [r]$, we can also view $\pi$ as a map $k[A']_S ^* \to k[B]_S ^*$.


\begin{theorem}
If $A \to B$ is a rank-preserving weak map of matroids, and $\pi: k[A']^* \to k[B]^*$, as well as the vector subspaces $\Phi_{A'}$ and $\Phi_B$, are as defined above, then $\pi$ maps $\Phi_{A'}$ surjectively onto $\Phi_B$. 
\end{theorem}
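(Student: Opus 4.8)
The plan is to prove surjectivity of $\pi|_{\Phi_{A'}} : \Phi_{A'} \to \Phi_B$ componentwise in the fine grading: for each $S \subseteq [r]$ I will show every $f \in (\Phi_B)_S$ has a preimage in $(\Phi_{A'})_S$. Fix such an $f = \sum_{\fl(C)=S} b_C \epsilon_C$, where the sum is over chains $C \in \Delta(B)$ of flag $S$. Using Proposition~\ref{phicond}, $f$ being in $(\Phi_B)_S$ means exactly that for each $i \in S$ and each flag-$(S\setminus i)$ chain $C$ in $B$, the ``branching sum'' $\sum_{D \supseteq C} b_D$ vanishes. I want to produce a functional $g = \sum_{\fl(E)=S} c_E \epsilon_E$ on $k[A']_S$, summing over flag-$S$ chains $E$ in $\Delta(A')$, such that $g \in (\Phi_{A'})_S$ and $\pi(g) = f$. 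Since $\pi = {}\circ\psi$ and $\psi(x_C) = \sum_{E \in \phi_B^{-1}(C)} x_E$, the condition $\pi(g) = f$ unwinds to: for every flag-$S$ chain $C$ in $\Delta(B)$, $\sum_{E :\, \phi_B(E) = C} c_E = b_C$.

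The natural attempt is to take $g$ supported on the \emph{lexicographically minimal} lift of each chain: for each flag-$S$ chain $C$ in $B$, choose the distinguished chain $E = \sigma(C)$ in $\Delta(A')$ with $\phi_B(E)=C$ and $\fl(E)=\fl(C)$ given by Proposition~\ref{phisurj}(3) — in fact the one built via $G_k = \phi_A(\{b_1,\dots,b_k\})$ for the Jordan--H\"older sequence of the minimal completion of $C$ — and set $c_{\sigma(C)} = b_C$, with $c_E = 0$ for all $E$ not of the form $\sigma(C)$. This immediately gives $\pi(g) = f$ provided the map $\sigma$ is injective (distinct $C$ give distinct lifts), which holds since $\phi_B(\sigma(C)) = C$ recovers $C$. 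So the entire content is verifying that this particular $g$ lies in $(\Phi_{A'})_S$, i.e.\ that for each $i \in S$ and each flag-$(S\setminus i)$ chain $C'$ in $\Delta(A')$, we have $\sum_{E' \supseteq C'} c_{E'} = 0$, where $E'$ ranges over flag-$S$ chains in $\Delta(A')$ refining $C'$.

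Here is where the work lies. Only the terms $E' = \sigma(C)$ contribute, so I must understand: which chains $C$ of flag $S$ in $B$ have $\sigma(C) \supseteq C'$? Applying $\phi_B$ to such an $E'=\sigma(C) \supseteq C'$ gives $C = \phi_B(E') \supseteq \phi_B(C')$, and $\phi_B(C')$ is a chain in $B$ of flag contained in $S\setminus i$ (possibly smaller, if $C'$ is not in $\FF(A')$-generic position — but since $C' \subseteq \Delta(A')$, every flat of $C'$ preserves rank under $\phi_B$, so $\phi_B(C')$ indeed has flag exactly $S\setminus i$). So $\sum_{E' \supseteq C'} c_{E'} = \sum_{C \supseteq \phi_B(C'),\, \sigma(C)\supseteq C'} b_C$, and I would like this to equal the full branching sum $\sum_{C \supseteq \phi_B(C')} b_C$, which is $0$ by the hypothesis on $f$ (Proposition~\ref{phicond} applied in $B$). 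Thus the crux is the combinatorial claim: \emph{if $C'$ is a flag-$(S\setminus i)$ chain in $\Delta(A')$ and $C \supseteq \phi_B(C')$ is a flag-$S$ chain in $B$, then automatically $\sigma(C) \supseteq C'$.} Equivalently, the lexicographically-minimal lift $\sigma$ is ``compatible with restriction'': refining $\phi_B(C')$ to $C$ in $B$ and then taking the minimal lift produces a chain in $A'$ that still contains $C'$.

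I expect this compatibility claim to be the main obstacle, and proving it will require a careful analysis of the interpolation/minimal-completion construction from the proof of Theorem~\ref{hvenum}, together with the key structural fact (Lemma~\ref{phidecr} and the definition of $A'$) that for flats in $\FF(A')$ the $A$-closure and $B$-closure of an initial segment $\{b_1,\dots,b_k\}$ of a valid basis have the same rank, so the Jordan--H\"older sequences upstairs and downstairs match on $\FF(A')$. The plausible strategy is: fix the rank-$i$ flat $G = G_i'$ of $C'$ (this is the unique flat of $C'$ in the "gap" at position $i$), let $G_{i-1}', G_{i+1}'$ be its neighbors in $C'$ (flats of $A'$), and let $F_{i-1} = \phi_B(G_{i-1}') \subsetneq F_{i+1} = \phi_B(G_{i+1}')$ be the corresponding flats of $C$; I must show that the rank-$i$ flat that the minimal-completion algorithm inserts for $C$ in the interval $[F_{i-1}, F_{i+1}]$, when lifted back via $\phi_A$, equals $G$. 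This should follow because $G$, being a flat of $A'$ sitting between $G_{i-1}'$ and $G_{i+1}'$, is generated over $G_{i-1}'$ by $\min(\phi_A(G_{i+1}') \setminus G_{i-1}')$ precisely when $G$ is itself minimal in $[G_{i-1}', G_{i+1}']$ — so one likely needs $C'$ to be chosen as (or replaced by) the minimal completion of its own restriction, and then to track that minimality is preserved by $\phi_B$ and $\phi_A^{-1}$ on $\FF(A')$, using the lex-consistency remark at the end of Section~\ref{sec:SR}. Once this compatibility lemma is in hand, the surjectivity statement follows immediately by the displayed computation above.
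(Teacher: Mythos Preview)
Your proposed lift $g$ --- supporting each coefficient $b_C$ on the single chain $\sigma(C)$ built from the Jordan--H\"older sequence of the minimal completion of $C$ --- does not in general lie in $\Phi_{A'}$. The compatibility claim you isolate (``if $C\supseteq\phi_B(C')$ then $\sigma(C)\supseteq C'$'') is false, and its failure does break the argument. Take $A=U_{3,4}$, $B$ the rank-$3$ matroid on $[4]$ with $\{1,3,4\}$ the unique $3$-element line (so $A'=A$), and let $f=f_{C^*}\in(\Phi_B)_{[2]}$ be the paper's basis element for the essential chain $C^*=(\{4\}\subset\{2,4\})$, i.e.\ the signed sum over $S_3$ of chains $C_\sigma$ built from the basis $4,2,1$. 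One of the terms of $f$ is $+\epsilon_{(\{1\}\subset\{1,3,4\})}$; your section $\sigma$ sends $(\{1\}\subset\{1,3,4\})$ to $(\{1\}\subset\{1,3\})$ (its own Jordan--H\"older sequence is $1,3,2$). Computing your $g$ and testing it against the flag-$\{2\}$ chain $C'=(\{1,3\})$ in $A$, the only $A$-chain of flag $[2]$ through $\{1,3\}$ carrying nonzero coefficient is $(\{1\}\subset\{1,3\})$ with coefficient $+1$, so the branching sum is $1\neq 0$. Hence $g\notin\Phi_{A'}$. Your hedge at the end (``one likely needs $C'$ to be \ldots\ minimal'') cannot rescue this: Proposition~\ref{phicond} demands the vanishing for \emph{every} flag-$(S\setminus i)$ chain $C'$, minimal or not.

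The paper avoids this by \emph{not} lifting chain-by-chain. It first writes down an explicit basis $\{f_C\}$ of $(\Phi_B)_S$ indexed by full chains $C$ of $B$ with descent set $S$; each $f_C$ is a signed sum $\sum_{\sigma\in H}\sgn(\sigma)\,\epsilon_{\nu(C_\sigma)}$ where every $C_\sigma$ is built from the \emph{same} ordered basis $b_1,\dots,b_{r+1}$ (the Jordan--H\"older sequence of $C$). The preimage $g_C$ is then the same signed sum with $B$-closures replaced by $A$-closures of subsets of that \emph{single} basis. Because all terms share one basis, the transposition $\sigma\mapsto\sigma\circ(i\;i{+}1)$ matches terms of $g_C$ in opposite-sign pairs exactly as it does in $f_C$, which is what places $g_C$ in $\Phi_{A'}$. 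In your construction the terms of $f$ are lifted via \emph{different} bases (each $C_\sigma$ supplies its own), so this pairing is destroyed --- in the example above, $(\{1\}\subset\{1,3,4\})$ and $(\{3\}\subset\{1,3,4\})$ lift to chains through $\{1,3\}$, but $(\{4\}\subset\{1,3,4\})$ lifts through $\{1,4\}$, and no cancellation occurs at $\{1,3\}$.
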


\begin{proof}
We begin with the following observation.

\begin{lem}
The dimension of the degree-$S$ component of $\Phi_M$ is $h_S (M)$.
\end{lem}

\begin{proof}
Recall that $\Phi$ is the subspace of $k[M]^*$ which annihilates $\Theta$. Therefore, its dimension in degree $S$ is $\dim_S k[M] / \Theta = \dim_S R_M = h_S (M)$. 
\end{proof}

We now proceed to the main proof. To show surjectivity, it suffices to find preimages under $\pi$ for the $h_S (B)$ members of a basis of $\Phi_B$. We start with the case $S = [r]$. Let $C$ be an essential full chain in $\Delta(B)$ with corresponding string $b_1 b_2 \dots b_{r+1}$. (By definition of essentiality, this string is completely descending.) Define
\[
f_C = \sum_{\sigma \in S_{r+1}} \sgn(\sigma) \epsilon_{C_\sigma},
\]
where $S_k$ is the symmetric group on $k$ elements and $C_\sigma$ is the full $B$-chain
\[
0_B \varsubsetneq \phi_B(\{b_{\sigma(1)}\}) \varsubsetneq \phi_B(\{b_{\sigma(1)}, b_{\sigma(2)}\}) \varsubsetneq \dots \varsubsetneq \phi_B(\{b_{\sigma(1)}, b_{\sigma(2)}, \dots, b_{\sigma(r)}\}) \varsubsetneq E.
\]
(This is a full chain because $b_1$, \dots, $b_{r+1}$ is a basis.) Similarly, define
\[
g_C = \sum_{\sigma \in S_{r+1}} \sgn(\sigma) \epsilon_{D_\sigma},
\]
where $D_\sigma$ is the $A$-chain
\[
0_A \subseteq \phi_A(\{b_{\sigma(1)}\}) \subseteq \phi_A(\{b_{\sigma(1)}, b_{\sigma(2)}\}) \subseteq \dots \subseteq \phi_A(\{b_{\sigma(1)}, b_{\sigma(2)}, \dots, b_{\sigma(r)}\}) \subseteq E.
\]
Note that $\{b_1, \dots, b_{r+1}\}$ remains a basis in $A$, so $D_\sigma$ is a full chain with one flat of each rank. We also see that $\phi_B (D_\sigma) = C_\sigma$, since for an independent set $I$, $\phi_M (I)$ is the set of elements that form a dependent set when added to $I$, and therefore $\phi_A (I) \subseteq \phi_B (I)$. This also shows that each flat of $D_\sigma$ is in $\FF(A')$. Thus $D_\sigma$ is a full $A'$-chain, and $\pi(\epsilon_{D_\sigma}) = \epsilon_{C_\sigma}$ and $\pi(g_C) = f_C$.

Next we show that the $f_C$, taken over all essential $C$, lie in $\Phi_B$, and that the $g_C$ lie in $\Phi_{A'}$. Fix an essential full chain $C$ with associated string $b_1 \dots b_{r+1}$. Given $\sigma \in S_{r+1}$, there is at least one $\sigma'$ such that $C_{\sigma'}$ differs from $C_{\sigma}$ in rank $i$ only, namely $\sigma \circ (i \ i+1)$. Now suppose that for some $\sigma' \in S_{r+1}$, $C_{\sigma'}$ differs from $C_{\sigma}$ in rank $i$ only. Then if if the rank $i-1$ and $i+1$ flats of $C_\sigma$ are $F_{i-1} = \phi_B (H)$ and $F_{i+1} = \phi_B (H \cup \{b, b'\})$ respectively, where $H, \{b, b'\} \subseteq \{b_1, \dots, b_{r+1}\}$, then there are exactly two possibilities for $F_k$, namely $\phi_B (H \cup \{b\})$ and $\phi_B (H \cup \{b'\})$. If $C_\sigma$ contains one of these two, then $C_{\sigma'}$ must contain the other one. That is, there is only one $\sigma'$ satisfying the description. Then $\sigma$ and $\sigma'$ differ by a transposition, so $\epsilon_{C_\sigma}$ and $\epsilon_{C_{\sigma'}}$ will have opposite signs in the expression for $f_C$. As a result, the condition that, for all $i \in S$ and all chains $C$ with fl $C = S\setminus i$, we have $\sum_{D \supseteq C} b_D = 0$, is satisfied, and by Prop.~\ref{phicond}, $f_C \in \Phi_B$. By the exact same argument, $g_C \in \Phi_{A'}$.

Next we show that the $f_C$ are linearly independent. To do this, we first observe that $C_\sigma \leq C$ in the lexicographic order for all $\sigma$. This is because $b_1 > b_2 > \dots > b_{r+1}$, so if the first rank in which $C_\sigma$ and $C$ differ is $k$, then $C_\sigma$'s flat of that rank contains an element less than $b_k$, and therefore comes lexicographically before $C$'s flat of rank $k$. This means that the matrix whose rows and columns are indexed by full chains of $\Delta(B)$ in lexicographic order, with the entry in row $C$, column $C'$ being the coefficient of $\epsilon_{C'}$ in $f_C$, is lower triangular, and all nonzero rows (i.e.\ those corresponding to essential $C$) have a 1 on the diagonal. Therefore these nonzero rows, hence the $f_C$ themselves, are linearly independent.

Finally, we note that the number of essential full chains has already been shown to be $h_{[r]} (B)$, which is also the dimension of $\Phi_B$ in degree $[r]$. Therefore, the $f_C$ form a basis for $\Phi_B$ in this degree. This completes the proof of the surjectivity of $\pi$ in degree $[r]$.

Now let $S$ be an arbitrary subset of $[r]$. Choose $C$ from among the full chains of $\Delta(B)$ that have descent set $S$, i.e.\ minimal completions of essential chains of flag $S$. Let $b_1 \dots b_{r+1}$ be the corresponding string. Define $C_\sigma$ and $D_\sigma$ as before, and let $\nu$ restrict a chain to the ranks in $S$. Set
\[
f_C = \sum_{\sigma \in H} \sgn(\sigma) \epsilon_{\nu(C_\sigma)},
\]
where $H$ is the subgroup of $S_{r+1}$ generated by the transpositions $\{(i \ i+1) \mid i \in S\}$. Analogously, set
\[
g_C = \sum_{\sigma \in H} \sgn(\sigma) \epsilon_{\nu(D_\sigma)}.
\]
As before, we have $g_C \in k[A']^*$ and $\pi(g_C) = f_C$. Now if $i \in S$, then $\sigma \in H$ if and only if $\sigma \circ (i \ i+1) \in H$, so a term corresponding to
\[
0_B \varsubsetneq \phi_B(\{b_{\sigma(1)}\}) \varsubsetneq \dots \varsubsetneq \phi_B(\{b_{\sigma(1)}, \dots, b_{\sigma(i-1)}, b_{\sigma(i)}\}) \varsubsetneq \dots \varsubsetneq E
\]
appears in $f_C$ if and only if one corresponding to
\[
0_B \varsubsetneq \phi_B(\{b_{\sigma(1)}\}) \varsubsetneq \dots \varsubsetneq \phi_B(\{b_{\sigma(1)}, \dots, b_{\sigma(i-1)}, b_{\sigma(i+1)}\}) \varsubsetneq \dots \varsubsetneq E
\]
(that is, a chain differing from $C_\sigma$ only in rank $i$) appears with opposite sign. That is, once again the Prop.~\ref{phicond} condition is satisfied for $f_C$ to be in $\Phi_B$ and $g_C$ to be in $\Phi_{A'}$.

To show that the $f_C$ are linearly independent, it suffices to check that $\nu(C_\sigma) \leq \nu(C)$ in lexicographic order for $\sigma \in H$. First we note that if $i \notin S$, then for all $\sigma \in H$, the rank $i$ flat in $C_\sigma$ is $\phi_B (b_1, \dots, b_i)$. Thus if $C_\sigma$ and $C$ differ, it must be at a rank in $S$. Let $j$ be the smallest rank where they differ, $i$ the largest element of $[r] \setminus S$ with $i < j$ (or 0 if no such element exists), and $i'$ the smallest element of $[r] \setminus S$ with $i' > j$ (or $r+1$ if no such element exists). By our choice of $C$, we have $b_{i+1} > \dots > b_{i'}$. Therefore the rank $j$ flat in $C_\sigma$ (as well as in $\nu(C_\sigma)$) is $\phi_B (\{b_1, \dots, b_i, b_{\sigma(i+1)}, \dots, b_{\sigma(j)}\})$. Since $\sigma$ takes $\{i+1, \dots , i'\}$ to itself, there must be some $i+1 \leq t \leq j$ with $j+1 \leq \sigma(t) \leq k'$, meaning the $j$-th flat of $\nu(C_\sigma)$ is less than the $j$-th flat of $\nu(C)$ in lexicographic order.

We have thus demonstrated $h_S (B)$ linearly independent elements of $(\Phi_B)_S$, a vector space of dimension $h_S (B)$; they are therefore a basis. For each one, we have found a $g_C \in (\Phi_{A'})_S$ with $\pi(g_C) = f_C$, therefore $\pi$ is surjective in all degrees.
\end{proof}

\begin{cor}
If $A \to B$ is a rank-preserving weak map of matroids, with the rings $R_{A'}$ and $R_B$, as well as the map $\bar{\psi}: R_B \to R_{A'}$, defined as before, then $\bar{\psi}$ is injective.
\end{cor}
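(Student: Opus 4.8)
The plan is to deduce injectivity of $\bar\psi$ from the surjectivity statement just proved, using the elementary linear-algebra duality that a linear map of finite-dimensional vector spaces is injective precisely when its transpose is surjective. Since $\bar\psi$ respects the fine grading, it suffices to show that $\bar\psi_S : (R_B)_S \to (R_{A'})_S$ is injective for each $S \subseteq [r]$. Both of these spaces are finite-dimensional: $(R_B)_S$ has dimension $h_S(B)$ by Theorem~\ref{thm:CMflag}, and $(R_{A'})_S$ is finite-dimensional by Proposition~\ref{a'reduce}.

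Next I would make the duality explicit. By definition $\Phi_M$ is the annihilator of $\Theta_M$ inside $k[M]^\ast$, so there is a canonical isomorphism $(\Phi_M)_S \cong ((R_M)_S)^\ast$. Because $\bar\psi$ is the map induced by $\psi$ and $\psi(\Theta_B) \subseteq \Theta_{A'}$, the transpose of $\bar\psi_S$ under these identifications is exactly the restriction of $\pi$ to $(\Phi_{A'})_S$, with image in $(\Phi_B)_S$: indeed, for $\xi \in (\Phi_{A'})_S$ and $p \in k[B]_S$ one has $(\bar\psi_S)^\ast(\xi)(p + \Theta_B) = \xi(\psi(p)) = \pi(\xi)(p)$, and the left side does annihilate $\Theta_B$ since $\psi(\Theta_B) \subseteq \Theta_{A'}$ and $\xi$ annihilates $\Theta_{A'}$. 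The preceding theorem asserts precisely that this restricted map $(\Phi_{A'})_S \to (\Phi_B)_S$ is surjective. Hence $\bar\psi_S$, being the pre-transpose of a surjection between finite-dimensional spaces, is injective; as $S$ was arbitrary and $\bar\psi$ is graded, $\bar\psi$ is injective.

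The only point requiring care is the verification that the transpose of $\bar\psi_S$ coincides, on the nose, with $\pi$ restricted to $\Phi_{A'}$ under the chosen identification $(\Phi_M)_S \cong ((R_M)_S)^\ast$ — but this is a routine unwinding of the definitions of $\psi$, $\bar\psi$, and $\pi$ (pre-composition with $\psi$). No genuine obstacle remains beyond that bookkeeping, since all the substantive work has been carried out in the preceding theorem.
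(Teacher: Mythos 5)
Your argument is correct and is exactly the paper's approach: the paper likewise deduces injectivity of $\bar{\psi}$ formally from the surjectivity of the dual map $\pi$ restricted to $\Phi_{A'} \to \Phi_B$, using the identification of $\Phi_M$ with $R_M^\ast$ as the annihilator of $\Theta_M$. Your version simply spells out the finite-dimensionality and the degree-by-degree bookkeeping that the paper leaves implicit.
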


\begin{proof}
This follows formally from the fact that the dual map $\pi : \Theta_{A'} \to \Theta_B$ is surjective.
\end{proof}


Corollary~\ref{cor:reduction} thus implies the desired result:

\begin{cor}
If $A \to B$ is a rank-preserving weak map of matroids, whose order complexes have flag $h$-numbers $h_S$, then $h_S (A) \geq h_S (B)$ for all $S$.
\end{cor}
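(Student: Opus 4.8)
The final statement is the conclusion $h_S(A) \ge h_S(B)$ for a rank-preserving weak map $A \to B$, and the excerpt has already reduced this, via Corollary~\ref{cor:reduction}, to showing that $\bar\psi : R_B \to R_{A'}$ is injective; and this in turn follows formally from the surjectivity of the dual map $\pi : \Phi_{A'} \to \Phi_B$. So my plan is to prove that $\pi(\Phi_{A'}) = \Phi_B$, degree by degree in the fine grading. Since $\dim(\Phi_B)_S = h_S(B)$ (the annihilator of $\Theta_B$ in degree $S$ has exactly that dimension), it suffices to exhibit, for each $S \subseteq [r]$, a set of $h_S(B)$ linearly independent elements of $(\Phi_B)_S$, each of which lies in the image of $\pi$.

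\textbf{Key steps.} First I would handle the top degree $S = [r]$. The combinatorics earlier in the paper tell us that $h_{[r]}(B)$ counts essential full chains of $\Delta(B)$, equivalently valid strings $b_1 b_2 \dots b_{r+1}$ that are completely descending. For each such essential chain $C$ with string $b_1 > b_2 > \dots > b_{r+1}$, I would build an alternating sum $f_C = \sum_{\sigma \in S_{r+1}} \sgn(\sigma)\, \epsilon_{C_\sigma}$, where $C_\sigma$ is the full $B$-chain obtained by closing the prefixes of the permuted basis $b_{\sigma(1)}, \dots, b_{\sigma(r+1)}$. The crucial points to verify are: (i) $f_C \in \Phi_B$ — this is checked against the criterion of Proposition~\ref{phicond}, using that for a chain differing from $C_\sigma$ only at rank $i$ there is exactly one partner $\sigma'$ (namely $\sigma \circ (i\ i{+}1)$), so the contributions cancel in pairs; (ii) $f_C$ has a preimage under $\pi$ — here one forms the analogous $g_C = \sum \sgn(\sigma)\epsilon_{D_\sigma}$ using $A$-closures of the prefixes, observes that $\phi_A(I) \subseteq \phi_B(I)$ forces $\phi_B(D_\sigma) = C_\sigma$ and that each flat of $D_\sigma$ lies in $\FF(A')$ (it has a basis independent in $B$), so $g_C \in \Phi_{A'}$ and $\pi(g_C) = f_C$; and (iii) the $f_C$ are linearly independent. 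For (iii), the key observation is that since the string is descending, every $C_\sigma$ satisfies $C_\sigma \le C$ in the lexicographic order on full chains, so ordering chains lexicographically makes the coefficient matrix lower triangular with $1$'s on the diagonal for the essential rows. Counting gives exactly $h_{[r]}(B)$ independent elements, hence a basis of $(\Phi_B)_{[r]}$.

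\textbf{General $S$.} For arbitrary $S \subseteq [r]$, I would run the same construction but restricted: pick $C$ among the full chains of descent set $S$ (minimal completions of essential flag-$S$ chains), take only permutations $\sigma$ in the subgroup $H \le S_{r+1}$ generated by $\{(i\ i{+}1) : i \in S\}$, and apply the truncation map $\nu$ keeping only ranks in $S$, setting $f_C = \sum_{\sigma \in H}\sgn(\sigma)\epsilon_{\nu(C_\sigma)}$ and likewise $g_C$. Checking $f_C \in \Phi_B$ again reduces to Proposition~\ref{phicond}: for $i \in S$, $\sigma \in H \iff \sigma(i\ i{+}1) \in H$, so the pairing-and-cancellation still works. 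For linear independence one verifies $\nu(C_\sigma) \le \nu(C)$ lexicographically: at ranks outside $S$ the flat is pinned to $\phi_B(b_1,\dots,b_i)$, and at the first rank $j \in S$ where $C_\sigma$ differs from $C$, because $b_1 > \dots$ is descending on the relevant block between consecutive non-$S$ ranks, the permuted prefix must introduce some element forcing the flat strictly lower in lex order. Then $\dim(\Phi_B)_S = h_S(B)$ matches the number of essential flag-$S$ chains, so we again get a basis, and since each $f_C = \pi(g_C)$, the map $\pi$ is surjective in degree $S$ for every $S$.

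\textbf{Main obstacle.} The heart of the difficulty — and where care is genuinely needed — is the linear-independence argument in the general-$S$ case: pinning down exactly which rank the chain $\nu(C_\sigma)$ first disagrees with $\nu(C)$, and arguing that the descending structure of the string on each maximal block of consecutive ranks in $S$ guarantees the lexicographic inequality goes the right way. The $\Phi_B$-membership (Proposition~\ref{phicond}) verification is the same clean pairing argument in every degree, and the existence of preimages is essentially immediate from $\phi_A \subseteq \phi_B$; it is the bookkeeping with $\nu$, the subgroup $H$, and the lex order that requires the most attention, since a sign error or an off-by-one in identifying the ``active block'' $\{i+1,\dots,i'\}$ would break the triangularity.
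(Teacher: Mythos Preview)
Your proposal is correct and follows essentially the same argument as the paper: you reduce to surjectivity of $\pi:\Phi_{A'}\to\Phi_B$, construct the alternating-sum functionals $f_C$ (and their $A'$-preimages $g_C$) over $S_{r+1}$ in top degree and over the Young subgroup $H=\langle (i\ i{+}1):i\in S\rangle$ in general, verify membership in $\Phi$ via the pairing $\sigma\mapsto\sigma\circ(i\ i{+}1)$ and Proposition~\ref{phicond}, and prove linear independence by lexicographic triangularity. The only point worth sharpening is exactly the one you flag: in the general-$S$ lex argument, the paper pins the ``active block'' as $\{i+1,\dots,i'\}$ between the nearest non-$S$ ranks $i<j<i'$ and uses $b_{i+1}>\cdots>b_{i'}$ (not the full descent of the whole string), so be precise there.
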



By combining this result with Prop.~\ref{fh-vectors} and Thm.~\ref{smineq}, we summarize our conclusions as follows:

\begin{theorem} \label{finalthm}
Let $A \to B$ be a weak map of matroids, with $\rk(B) = r+1$. Let $h_i$ and $f_i$ be the standard $h$- and $f$-numbers of their lattice of flats. Then,

\begin{enumerate}
\item $h_i (A) \geq h_i (B)$ for all $0 \leq i \leq r$.

\item $f_i (A) \geq f_i (B)$ for all $0 \leq i \leq r$.
\end{enumerate}

If $\rk(A)$ is also $r+1$, then, letting $h_S$ and $f_S$ be the flag $h$- and $f$-numbers,

\begin{enumerate}[resume]
\item $h_S (A) \geq h_S (B)$ for all $S \subseteq [r]$.

\item $f_S (A) \geq f_S (B)$ for all $S \subseteq [r]$.
\end{enumerate}
\end{theorem}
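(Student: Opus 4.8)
The plan is to obtain Theorem~\ref{finalthm} by assembling facts already in place, using Proposition~\ref{mapdecomp} to split a general weak map into the two cases understood individually: strong maps, handled by Theorem~\ref{smineq}, and rank-preserving weak maps, handled by the Corollary just proved (which gives $h_S(A) \ge h_S(B)$ for all $S$). The $f$-vector statements then either were already recorded earlier or follow from the $h$-vector statements because every $f$-number is a nonnegative combination of $h$-numbers.

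First I would dispatch parts (3) and (4). If $\rk(A) = \rk(B) = r+1$, the map $A \to B$ is itself rank-preserving, so the preceding Corollary gives $h_S(A) \ge h_S(B)$ for every $S \subseteq [r]$, which is (3). For (4), recall $f_S = \sum_{T \subseteq S} h_T$ by Proposition~\ref{fh-vectors}(3) and that every $h_T$ of the order complex of a lattice of flats is nonnegative, being a count of essential chains of flag $T$; summing the inequalities of (3) over $T \subseteq S$ yields $f_S(A) \ge f_S(B)$. (This also appears directly as the rank-preserving case of the $f$-vector corollary stated just after Proposition~\ref{phisurj}.)

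Next, for an arbitrary weak map $A \to B$ with $\rk(B) = r+1$: by Proposition~\ref{mapdecomp} it factors through an intermediate matroid $C$ as a strong map composed with a rank-preserving weak map, in one of the two possible orders, and in either order $\rk(C) \ge r+1$. Along the rank-preserving arrow, the Corollary gives $h_S$-monotonicity, and summing over all $S$ of a fixed size via $h_i = \sum_{|S|=i} h_S$ (Proposition~\ref{fh-vectors}(2)) upgrades it to $h_i$-monotonicity for every index in range. Along the strong arrow, Theorem~\ref{smineq} gives $h_i$-monotonicity up to one below the rank of its target. Since $\rk(C) \ge r+1$, the range $0 \le i \le r$ lies inside the applicable range on both arrows, so composing the two inequalities gives $h_i(A) \ge h_i(B)$ for $0 \le i \le r$, which is (1). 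Part (2) is the general-weak-map case of the $f$-vector corollary just after Proposition~\ref{phisurj}; it can also be re-derived from (1), since each $f_k$ is a nonnegative combination of $h_0, \dots, h_k$ (when $\rk(A) > \rk(B)$ one additionally uses that $h_i(A) \ge 0$ and that $\binom{n}{a}$ is nondecreasing in $n$).

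The whole argument is formal; the only point requiring care is the bookkeeping of the rank ranges across the factorization through $C$, namely verifying that $0 \le i \le r$ is contained in the range where each of Theorem~\ref{smineq} and the rank-preserving inequality applies, which holds because $\rk(C) \ge \rk(B)$. No new obstacle arises here: all the genuine work was already spent establishing the rank-preserving flag-$h$ inequality through the surjectivity of $\pi$.
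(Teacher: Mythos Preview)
Your proposal is correct and follows essentially the same approach as the paper: the paper presents Theorem~\ref{finalthm} as a summary obtained ``by combining this result with Prop.~\ref{fh-vectors} and Thm.~\ref{smineq},'' which is exactly the assembly you carry out, with Proposition~\ref{mapdecomp} supplying the factorization needed to handle general weak maps. Your explicit bookkeeping of the rank ranges across the factorization is a welcome elaboration of what the paper leaves implicit.
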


\section*{Acknowledgments}

The authors would like to thank Spencer Backman, Sara Billey, Isabella Novik, and Nicholas Proudfoot for their helpful discussions and comments.

\end{document}